\newtheorem{thm}{Theorem}
\newtheorem{cor}[thm]{Corollary}
\newtheorem{prop}[thm]{Proposition}
\newtheorem{lem}[thm]{Lemma}
\newtheorem{conjecture}[thm]{Conjecture}
\theoremstyle{definition}
\numberwithin{equation}{section}
\numberwithin{thm}{section}
\newcommand{\C}{\ensuremath{{\mathbb{C}}}}
\newcommand{\Z}{\ensuremath{{\mathbb{Z}}}}
\renewcommand{\P}{\ensuremath{{\mathbb{P}}}}
\newcommand{\Q}{\ensuremath{{\mathbb{Q}}}}
\newcommand{\p}{\ensuremath{{\mathfrak{p}}}}
\newcommand{\q}{\ensuremath{{\mathfrak{q}}}}
\newcommand{\fp}{\ensuremath{{\mathfrak{p}}}}
\newcommand{\fq}{\ensuremath{{\mathfrak{q}}}}
\newcommand{\fm}{\ensuremath{{\mathfrak{m}}}}
\newcommand{\fr}{\ensuremath{{\mathfrak{r}}}}
\newcommand{\cZ}{\mathcal Z}
\newcommand{\cY}{\mathcal Y}
\newcommand{\cX}{\mathcal X}
\renewcommand{\o}{\ensuremath{{\mathfrak{o}}}}
\DeclareMathOperator{\Gal}{Gal}
\DeclareMathOperator{\rad}{rad}
\DeclareMathOperator{\Aut}{Aut}
\begin{document}


\title[$ABC$ Implies a Zsigmondy Principle for Ramification]{$ABC$ Implies a Zsigmondy Principle for Ramification}

\author[A. Bridy and T. J. Tucker]{Andrew Bridy and Thomas J. Tucker}
\address{Andrew Bridy\\Department of Mathematics\\ University of Rochester\\
Rochester, NY, 14620, USA}
\email{abridy@ur.rochester.edu}
\address{Thomas J. Tucker\\Department of Mathematics\\ University of Rochester\\
Rochester, NY, 14620, USA}
\email{thomas.tucker@rochester.edu}

\date{}

\begin{abstract}
Let $K$ be a number field or a function field of characteristic 0. If $K$ is a number field, assume the $abc$-conjecture for $K$. We prove a variant of Zsigmondy's theorem for ramified primes in preimage fields of rational functions in $K(x)$ that are not postcritically finite. For example, suppose $K$ is a number field and $f\in K[x]$ is not postcritically finite, and let $K_n$ be the field generated by the $n$th iterated preimages under $f$ of $\beta\in K$. We show that for all large $n$, there is a prime of $K$ that ramifies in $K_n$ and does not ramify in $K_m$ for any $m<n$.
\end{abstract}

\keywords{Arithmetic Dynamics, Ramification, abc Conjecture}
\subjclass[2010]{37P05; 11G50, 14G25}
\thanks{The second author was partially supported by NSF Grant DMS-1501515.}

\maketitle

\section{Introduction}
Let $K$ be either a number field or a function field of characteristic
0 of transcendence degree 1 over its field of constants. Let $\phi\in K(x)$ be a rational function. Recall that the
morphism $\phi:\P^1(K)\to\P^1(K)$ is \emph{postcritically finite} if
the forward orbit of the ramification locus of $\phi$ is a finite
set. Let $\phi$ be a non-postcritically finite rational function of degree $d\geq 2$ and let $\beta\in\P^1(K)$. As is usual in dynamics, we use $\phi^n$ to denote the map $\phi$ composed with itself $n$ times. For each $n \geq 1$, let
$$K_n = K(\phi^{-n}(\beta))=K(\gamma\in \overline{K}:\phi^n(\gamma)=\beta).$$
It is a theorem of the first author and coauthors \cite{BIJJ} that for
any $\beta\in\P^1(K)$, there are infinitely many primes in $K$ that
ramify in $\bigcup_{n=1}^\infty K_n$.  The main idea of the
theorem is to produce prime divisors of $\phi^n(\alpha) - \beta$ for
$\alpha$ a critical point of $\phi$ with canonical height
$h_\phi(\alpha) > 0$.  The fact that there are infinitely many such
primes follows from \cite{SilInt}.  Various authors (see
\cite{Zsigmondy, Elkies, Rice, Krieger, IngramSilverman, FaberGranville, GNT} for
example) have sought to show that not only are there infinitely many
primes that divide $\phi^n(\alpha) - \beta$ for some $n$, but the stronger statement that there exists
an $N$ such that for all $n > N$, there is a prime that divides
$\phi^n(\alpha) - \beta$ that does not divide $\phi^m(\alpha) -
\beta$ for any $m < n$.  If this is true, one might say that there are
infinitely many primes dividing $\phi^n(\alpha) - \beta$ for some $n$
because after a certain point each ``new iterate'' $\phi^n(\alpha) -
\beta$ gives a ``new prime'' dividing $\phi^n(\alpha) - \beta$.  This is
sometimes referred as the ``Zsigmondy principle'', after Zsigmondy \cite{Zsigmondy}
who studied these questions in the context of primitive divisors of
$a^n - b^n$.

In this paper, we prove a Zsigmondy principle for ramification for
certain types of rational functions, including polynomials.  Our
results are conditional on the $abc$ conjecture when $K$ is a number field.  For
polynomials, our result is the following. Recall that if $K$ is a function field with field of constants $k$, $f$ is said to be {\em isotrivial} if there is an element $\sigma \in \overline{K}(x)$ of degree one such that $\sigma  \circ \phi \circ \sigma^{-1} \in \bar{k}(x)$.

\begin{thm}\label{cor: polynomials}
  Let $K$ be a number field or a function field of characteristic 0.
  Let $f\in K[x]$ be a polynomial with $\deg f\geq 2$ that is
  not postcritically finite.  If $K$ is a number field, assume the
  $abc$ conjecture for $K$.  If $K$ is a function field, assume that
  $f$ is not isotrivial. Then, for all sufficiently large $n$, there
  exists a prime of $K$ that ramifies in $K(f^{-n}(\beta))$ and does not
  ramify in $K(f^{-m}(\beta))$ for $m<n$.
\end{thm}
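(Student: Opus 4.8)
The plan is to translate ramification in the tower $K_n=K(f^{-n}(\beta))$ into divisibility statements about the forward orbits of the critical points of $f$, and then to invoke the $abc$ conjecture (when $K$ is a number field) or the Mason--Stothers theorem (when $K$ is a function field) to produce the required ``new'' ramified prime. Let $c_1,\dots,c_r\in\A^1$ be the distinct finite critical points of $f$, with local multiplicities $e_1,\dots,e_r\ge 2$, so $\sum_i(e_i-1)=\deg f-1$ (the point at infinity is a totally ramified fixed point and is irrelevant). Fix a finite set $S$ of places of $K$ containing the places of bad reduction of $f$, the places dividing the leading coefficients of $f$ and $f'$, the places at which some $c_i$ or $\beta$ fails to be integral, the places at which two of the $c_i$ collide, and the places of residue characteristic at most $\deg f$ (so that all the ramification below is tame). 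The standard recursion for discriminants of iterates gives
\[
\operatorname{disc}\bigl(f^n(x)-\beta\bigr)=C_n\prod_{k=1}^{n}\Bigl(\prod_{i=1}^{r}\bigl(f^k(c_i)-\beta\bigr)^{e_i-1}\Bigr)^{(\deg f)^{\,n-k}},
\]
where the prime support of the constant $C_n$ lies in $S$; hence a prime $\fp\notin S$ is unramified in $K_n$ as soon as $\fp\nmid\prod_{i}\prod_{k=1}^{n}\bigl(f^k(c_i)-\beta\bigr)$.

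Next I would prove a clean ramification criterion. Suppose $\fp\notin S$ and $\fp\nmid\prod_i\prod_{k=1}^{n-1}\bigl(f^k(c_i)-\beta\bigr)$, so that $\fp$ is unramified in $K_{n-1}$, hence in every $K_m$ with $m<n$. Then $\fp$ ramifies in $K_n$ if and only if $e_i\nmid v_\fp\bigl(f^n(c_i)-\beta\bigr)$ for some $i$. This is a local computation: under the hypothesis the only way $\fp$ can ramify in $K_n$ is through a root $\gamma$ of $f^n(x)-\beta$ reducing to a critical point $c_i$ with $f^n(c_i)\equiv\beta\pmod{\fp}$, and because $c_i$ is a genuine characteristic-zero critical point the coefficients of $t,\dots,t^{e_i-1}$ in $f^n(c_i+t)-\beta$ vanish identically; thus the Newton polygon of this polynomial at $\fp$ is the single segment from $\bigl(0,v_\fp(f^n(c_i)-\beta)\bigr)$ to $(e_i,0)$, and by tameness the corresponding local extension is ramified precisely when that slope is not an integer. (If the forward orbit of $c_i$ should meet the critical locus of $f$, the integer $e_i$ is merely replaced by some larger integer $\ge 2$, with no change in the conclusion.) In particular it suffices to produce, for all sufficiently large $n$, a prime $\fp\notin S$ together with a critical point $c_0$ such that $v_\fp\bigl(f^n(c_0)-\beta\bigr)=1$ while $\fp\nmid f^k(c_i)-\beta$ for every $i$ and every $k<n$.

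To find such a prime, observe that since $f$ is not postcritically finite some critical point $c_0$ is wandering, and therefore $\hat h_f(c_0)>0$: over a number field this is Northcott's theorem, and over a function field it is exactly where non-isotriviality is used, via the fact that $\hat h_f$ vanishes precisely on preperiodic points. Write $A_n=f^n(c_0)-\beta$; this is nonzero for all but at most one $n$, and $h(A_n)=(\deg f)^n\hat h_f(c_0)+O(1)$. The heart of the argument is then to show --- assuming $abc$ for $K$ in the number field case, and unconditionally (via Mason--Stothers) in the function field case --- that for all large $n$ the number $A_n$ has a prime divisor outside $S$, occurring to the first power, that divides none of the $f^k(c_i)-\beta$ with $k<n$. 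One feeds a suitable $abc$ triple coming from the recursion $f^n(c_0)=f\bigl(f^{n-1}(c_0)\bigr)$ (for instance, when $f$ is conjugate to $x^d+c$ one uses $f^{n-1}(c_0)^d-A_n=\beta-c$) into the $abc$ inequality; this forces the radical of $A_n$ to be almost as large as $A_n$ itself, and comparing with the bounded contribution of the primes in $S$ and with the contribution of the primes already present in the earlier iterates yields the required primitive prime divisor. The theorem then follows from the ramification criterion.

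The step I expect to be the genuine obstacle is precisely this last one. A naive size count is not enough: one has $\sum_{k<n}h\bigl(f^k(c_i)-\beta\bigr)=\Theta\bigl((\deg f)^n\bigr)$, of the same order of magnitude as $h(A_n)$, and in fact strictly larger when $\deg f=2$, so it is false that $A_n$ is simply ``too large to be assembled from old primes.'' The way around this must exploit that the primes shared between $A_n$ and the earlier iterates are tightly constrained --- a prime dividing both $A_n$ and $f^k(c_0)-\beta$ also divides $f^{\,n-k}(\beta)-\beta$ --- so that, after passing to radicals, their aggregate contribution is small enough for the $abc$ estimate still to leave room for a new prime occurring to the first power. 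Making this quantitative and uniform in $n$, especially in low degree, is the technical core of the proof; the remaining ingredients (the discriminant recursion, the tame Newton-polygon criterion, and the positivity of $\hat h_f(c_0)$) are routine.
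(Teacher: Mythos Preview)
Your outline has the right architecture---the discriminant recursion and the Newton-polygon criterion are essentially the paper's Propositions~\ref{necessary} and~\ref{sufficient}---and you have correctly isolated the genuine obstruction: the ``old'' primes from earlier iterates have total height of the same order as $h(A_n)$, so a naive size count fails. But you have not resolved this obstruction, and the mechanism you suggest is incomplete. Your observation that a prime dividing both $A_n$ and $f^k(c_0)-\beta$ must divide $f^{n-k}(\beta)-\beta$ only constrains primes coming from earlier iterates of the \emph{same} critical point $c_0$; it says nothing about primes dividing $f^k(c_i)-\beta$ for the other critical points $c_i$, and there is no analogous congruence for those.

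The paper handles the two sources of old primes by separate devices. For primes already appearing in the orbit of $c_0$ itself, Lemma~\ref{from-5.1} (resting on Proposition~5.1 of \cite{GNT}) gives a bound of $\delta d^n h_\phi(c_0)$ for any $\delta>0$; this is the quantitative form of your congruence idea. For the other critical points, the key idea you are missing is to choose $c_0$ to be the critical point of \emph{maximal} canonical height. Then the contribution from any other wandering critical point $c_i$ over all iterates $k<n$ is at most $\sum_{k<n} d^k h_\phi(c_i)\le \frac{d^n}{d-1}h_\phi(c_0)$, and since a degree-$d$ polynomial has at most $d-1$ finite critical points there are at most $d-2$ such other grand orbits; summing gives at most $\frac{d-2}{d-1}d^n h_\phi(c_0)$, leaving a margin of $\frac{1}{d-1}d^n h_\phi(c_0)$. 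This is exactly Lemma~\ref{other G} together with the hypothesis $g\le d-1$ of Theorem~\ref{thm: main}, of which the polynomial case is an immediate corollary.

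One further point: your proposed use of $abc$ on the single triple coming from $f^n(c_0)=f(f^{n-1}(c_0))$ only yields $\rad(A_n)\ge (1-1/d-\epsilon)h(A_n)$, which is not strong enough to survive the subtraction above. The paper instead applies the Roth-$abc$ estimate (Proposition~\ref{Roth-abc}) to the degree-$d^m$ polynomial $p_m(X)-\beta_j q_m(X)$ for $m$ large, evaluated at $x=f^{n-m}(c_0)$; passing first to non-postcritical, non-periodic preimages $\beta_j$ of $\beta$ ensures this polynomial is squarefree, and one obtains $\sum_{v_\fp(A_n)=1}N_\fp \ge (1-\epsilon)h(A_n)$, which is what the margin $\tfrac{1}{d-1}$ requires.
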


Our most general theorem is most easily stated in terms of grand orbits. The \emph{orbit} or \emph{forward orbit} of $\beta\in \P^1(K)$ is 
$$\mathcal{O}_\phi(\beta)=\{\phi^n(\beta):n\geq 0\}=\{\beta,\phi(\beta),\phi^2(\beta),\dots\}.$$
The \emph{backward orbit} of $\beta$ is
$$\mathcal{O}_\phi^-(\beta)=\{\alpha\in \P^1(\overline{K}):\phi^n(\alpha)=\beta\text{ for some }n\geq 0\}=\bigcup_{n=0}^\infty \phi^{-n}(\beta).$$
The \emph{grand orbit} of $\beta$ is the backward orbit of the forward orbit, that is, 
$$\mathcal{GO}_\phi(\beta)=\{\alpha\in \P^1(\overline{K}):  \phi^m(\alpha)=\phi^n(\beta)\text{ for some }m,n\in\Z_{\geq 0}\}.$$
Grand orbits under $\phi$ partition $\P^1(\overline{K})$ into equivalence classes. A point $\beta$ is said to be \emph{exceptional} for $\phi$ if its grand orbit is a finite set. It is well known that if $\beta$ is exceptional for $\phi$, then (up to conjugacy by a fractional linear transformation) either $\phi$ is a polynomial and $\beta=\infty$, or $\phi(x)=x^d$ for some $d\in\Z$ and $\beta\in\{0,\infty\}$.

A point $\beta\in\P^1(K)$ is \emph{periodic} if $\phi^n(\beta)=\beta$ for some $n>0$ and \emph{preperiodic} if $\phi^n(\beta)=\phi^m(\beta)$ for some $n>m\geq 0$. A point that is not preperiodic is \emph{wandering}. We define a grand orbit to be preperiodic if one (equivalently any) of its points is preperiodic, and wandering otherwise.

We now state the main theorem. If $K$ is a number field, we will assume that the $abc$ conjecture holds for $K$. If $K$ is a function field of characteristic 0, the $abc$ conjecture is a theorem of Mason-Stothers \cite{Mason,Stothers} (see also Silverman \cite{SilvermanABC}). As we now consider rational maps from $\P^1(K)$ to itself, it is possible for $\infty$ to arise as a preimage of $K$, in which case we simply declare that $K(\infty)=K$.

\begin{thm}\label{thm: main}
  Let $\phi\in K(x)$ with $\deg\phi\geq 2$. Suppose that $\phi$ is not postcritically finite and that $\beta \in \P^1(K)$ is not exceptional for $\phi$. If $K$ is a number field,
  assume the $abc$ conjecture for $K$. If $K$ is a function field,
  assume that $\phi$ is not isotrivial. Suppose that the ramification locus $R_\phi$ intersects
  at most $d-1$ distinct wandering grand orbits. For all sufficiently
  large $n$, there exists a prime of $K$ that ramifies in
  $K(\phi^{-n}(\beta))$ and does not ramify in $K(\phi^{-m}(\beta))$ for $m<n$.
\end{thm}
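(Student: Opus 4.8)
The plan is to study ramification of primes in $K(\phi^{-n}(\beta))$ via the discriminant/different, relating it to the values $\phi^n(\alpha) - \phi^j(\beta)$ where $\alpha$ ranges over critical points of $\phi$. The key classical fact is that a prime $\p$ of $K$ ramifies in $K(\phi^{-n}(\beta))$ only if it divides the discriminant of the polynomial (in $T$) defining $\phi^n(T) = \beta$, and by the chain rule for critical points, such primes must (up to a bounded bad set coming from the fixed coefficients of $\phi$, the residue characteristic, and primes of bad reduction) divide some $\phi^{n}(\alpha) - \phi^j(\beta)$ for a critical point $\alpha$ of $\phi$ and some $0 \le j$, or more precisely divide $\phi^{n-1-i}(\phi(\alpha)) - \beta$ combined via the orbit structure. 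So the problem reduces to a "primitive prime divisor" statement: show that for all large $n$, some critical orbit value $\phi^n(\alpha) - \beta$ (suitably interpreted over the grand orbit) has a prime divisor not appearing in any earlier such value and not ramifying in any $K(\phi^{-m}(\beta))$ for $m < n$.

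**The height/$abc$ input:**

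First I would use the hypothesis that $\phi$ is not postcritically finite to locate a critical point $\alpha$ whose forward orbit is infinite and whose canonical height $h_\phi(\alpha) > 0$; since $\beta$ is not exceptional and $\phi$ is not PCF, such $\alpha$ exists (this is the same mechanism as in \cite{BIJJ}). Then I would invoke the $abc$ conjecture for $K$ (or Mason--Stothers in the function field case, using non-isotriviality to guarantee genuine growth) in the standard way — following Silverman's argument in \cite{SilInt} and its refinements — to show that the radical $\rad(\phi^n(\alpha) - \beta)$ grows: more precisely, that the "new part" of $\phi^n(\alpha) - \beta$, after dividing out all prime factors shared with $\phi^m(\alpha) - \beta$ for $m < n$ and with the fixed bad set, still has positive height growth, hence is nonempty for $n$ large. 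This is where the bound on the number of wandering grand orbits met by $R_\phi$ enters: it controls how many distinct "streams" of critical values one must track simultaneously, and ensures the $abc$-inequality can be applied with the factorization $A + B = C$ where $C = \phi^n(\alpha)-\beta$ and $A, B$ are built from the smaller iterates, so that the total count of auxiliary points stays $\le d-1$ and the telescoping in the $abc$ estimate closes.

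**From primitive divisors of critical orbits to primitive ramified primes:**

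The next step is to convert a primitive prime divisor $\p$ of $\phi^n(\alpha) - \beta$ (one not dividing $\phi^m(\alpha)-\beta$ for $m<n$, not in the bad set) into a prime of $K$ that ramifies in $K(\phi^{-n}(\beta))$ but not in $K(\phi^{-m}(\beta))$ for $m < n$. For this I would use the theory of ramification in towers: such a $\p$ causes a collision of roots of $\phi^n(T) = \beta$ exactly at the stage $n$ (because $\phi(\alpha) \equiv $ the appropriate iterate of $\beta$ only at level $n$, not earlier), producing wild or tame ramification at level $n$; and conversely, the absence of $\p$ from the earlier discriminants means $\p$ is unramified in $K(\phi^{-m}(\beta))$ for $m < n$. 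Care is needed to handle the contribution of critical points in the other ($\le d-2$ remaining) wandering grand orbits and in preperiodic grand orbits — the latter contribute only finitely many primes total, which can be absorbed into the bad set, while the former are handled by choosing $n$ large enough that the height growth of $\phi^n(\alpha)-\beta$ dominates the logarithmic contribution from all the other critical streams.

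**Main obstacle:**

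The hard part will be the bookkeeping in the $abc$ step: ensuring that after removing \emph{all} primes shared with earlier iterates \emph{and} all primes coming from every other critical point and every preperiodic grand orbit, what remains still has a prime in it. The constraint "$R_\phi$ meets at most $d-1$ wandering grand orbits" is precisely what makes the $abc$-conjecture's "$1 + \epsilon$" exponent on the radical sufficient — with $d$ or more independent streams the estimate would fail — so the delicate point is to set up the $A+B=C$ decomposition and the induction on $n$ so that exactly this numerical margin is exploited, and to verify that the implied constants (depending on $\phi$, $\beta$, $K$, and the bad set, but not on $n$) do not degrade. I expect the function field case to require separately checking that non-isotriviality forces the degree of $\phi^n(\alpha)-\beta$, as a function on the curve, to grow linearly in $d^n$, which is the analogue of $h_\phi(\alpha) > 0$ and is needed for Mason--Stothers to bite.
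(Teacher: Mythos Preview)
Your overall strategy matches the paper's: relate ramification in $K_n$ to primes dividing critical-orbit values $\phi^m(\alpha)-\beta$, use an $abc$-type estimate to find primitive prime divisors, and convert these back into primitive ramified primes. But there is one genuine gap in the conversion step, and the $abc$ step is set up differently from how it actually needs to go.

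The gap is in your sufficient condition. Merely having $\fp\mid \phi^n(\alpha)-\beta$ for a critical point $\alpha$ does \emph{not} force $\fp$ to ramify in $K_n$: divisibility of the discriminant of $\phi^n(T)-\beta$ is necessary but not sufficient for ramification of its splitting field (think of $(T-1)(T-1-p)$ over $\Q_p$). What the paper uses (Proposition~\ref{sufficient}) is that if $v_\fp(\phi^n(\alpha)-\beta)=1$ \emph{exactly}, then the first segment of the Newton polygon of $\phi^n(T)-\beta$ (shifted to $T\mapsto T+\alpha$) runs from $(0,1)$ to $(\ell,0)$ with $\ell\ge 2$ the ramification index of $\phi^n$ at $\alpha$, yielding a root of valuation $1/\ell$ and hence genuine ramification. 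Your phrase ``such a $\fp$ causes a collision \dots\ producing wild or tame ramification'' elides this. Consequently, in the height estimate you must lower-bound $\sum_{v_\fp=1}N_\fp$, not just the radical $\sum_{v_\fp>0}N_\fp$; this is recoverable from Roth-$abc$, but it is an extra step.

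The $abc$ input is also not an ``$A+B=C$ decomposition built from smaller iterates.'' The paper first replaces $\beta$ by non-periodic, non-postcritical preimages $\beta_1,\dots,\beta_N$ so that $p_m(X)-\beta_j q_m(X)$ is squarefree for every $m$, and then applies the Roth-$abc$ estimate (Proposition~\ref{Roth-abc}) directly to these polynomials evaluated at $\phi^{n-m}(\alpha)$. The hypothesis ``$R_\phi$ meets at most $d-1$ wandering grand orbits'' enters not as a count of auxiliary points in an $abc$ identity but through a geometric-series bound (Lemma~\ref{other G}): choosing $\alpha$ of maximal canonical height, each of the $\le d-2$ \emph{other} wandering grand orbits contributes at most $\tfrac{1}{d-1}d^n h_\phi(\alpha)+O(n)$ to the primes one must exclude, so the total excluded mass is $\tfrac{d-2}{d-1}d^n h_\phi(\alpha)$, strictly less than the $\approx d^n h_\phi(\alpha)$ supplied by Roth-$abc$. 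That numerical margin, not a telescoping $A+B=C$, is where the $d-1$ hypothesis is spent.
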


The restriction that $\phi$ be non-isotrivial is not a serious one.  Indeed, we can treat the case of isotrivial rational
functions by a fairly elementary argument, provided that $\beta$ is
not in the field of constants of $K$.  See Theorem \ref{iso}.  

Theorem \ref{thm: main} immediately produces Theorem \ref{cor:
  polynomials} as a special case, since a polynomial of degree $d$ has
at most $d-1$ critical points other than the point at infinity (which
is of course a fixed point).  For rational functions in general we have the following theorem, which shows that a new prime ramifies at every two levels in the tower of fields $K_n$. 

\begin{thm}\label{thm: every two levels}
Let $\phi\in K(x)$ with $\deg\phi\geq 2$. Suppose that $\phi$ is not postcritically finite and that $\beta \in \P^1(K)$ is not exceptional for $\phi$.  If $K$ is a number field, assume the $abc$
  conjecture for $K$.  If $K$ is a function field, assume that $\phi$ is
  not isotrivial.  For all sufficiently large $n$, there exists a
  prime of $K$ that ramifies in $K(\phi^{-n}(\beta))$  and does not ramify in
  $K(\phi^{-m}(\beta))$  for $m\leq n-2$.
\end{thm}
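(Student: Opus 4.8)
The plan is to translate ramification into divisibility of critical values, reduce the assertion to the existence of a primitive prime divisor two levels up the tower, and then feed this into the $abc$-based radical estimate already driving Theorem~\ref{thm: main}; the point is that asking for a new prime at level $n$ versus level $n-2$ (rather than $n-1$) introduces an extra factor of $d$ into the governing inequality, which is exactly enough to replace the hypothesis ``$R_\phi$ meets at most $d-1$ wandering grand orbits'' by the unconditional Riemann--Hurwitz bound of $2d-2$.

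First I would set up the dictionary between ramification and critical values. Fix a finite set $S$ of places of $K$ containing the archimedean places and all places of bad reduction of $\phi$ and $\beta$. By the chain rule $(\phi^{n})'=\prod_{i=0}^{n-1}\phi'\circ\phi^{i}$, for $\mathfrak p\notin S$ the fiber of $\phi^{n}$ over $\beta$ acquires a multiple point modulo $\mathfrak p$ precisely when $\phi^{j}(\alpha)\equiv\beta\pmod{\mathfrak p}$ for some $\alpha\in R_{\phi}$ and some $1\le j\le n$. Writing $a_{j}:=\prod_{\alpha\in R_{\phi}}\bigl(\phi^{j}(\alpha)-\beta\bigr)\in K$ (interpreted as a resultant, with $\infty$ and poles handled as declared above), this reads: $\mathfrak p\notin S$ ramifies in $K(\phi^{-n}(\beta))$ if and only if $\mathfrak p\mid a_{j}$ for some $j\le n$. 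Hence a prime ramifying in $K(\phi^{-n}(\beta))$ but in none of $K(\phi^{-m}(\beta))$ with $m\le n-2$ is exactly a prime outside $S$ dividing $a_{n-1}a_{n}$ but none of $a_{1},\dots,a_{n-2}$, i.e.\ a primitive divisor of $a_{n-1}a_{n}$ relative to the two preceding levels, and the theorem becomes the statement that such a divisor exists for all large $n$.

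Next I would isolate the growing part of the $a_{j}$. Discard the preperiodic critical points: their critical values move in finite sets, so they contribute only finitely many primes, which I absorb into $S$. Since $\phi$ is not postcritically finite (and, in the function field case, not isotrivial) at least one wandering critical point $\alpha$ survives, and any such $\alpha$ has $h_{\phi}(\alpha)>0$. Within each of the $r$ wandering grand orbits met by $R_{\phi}$ the forward orbits coincide from some level on, so for $n$ large $\rad_{S}(a_{n})$ equals, up to $S$, a product over those grand orbits of $\rad\bigl(\theta_{i}^{(n)}-\beta\bigr)$, where $\theta_{i}^{(n)}$ is the common orbit point at level $n$; by Riemann--Hurwitz $r\le 2d-2$. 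Running the $abc$ conjecture (Mason--Stothers over function fields) through the three-term relations $(\theta_{i}^{(n)}-\delta)-(\theta_{i}^{(n)}-\delta')=\delta'-\delta$ for distinct $\delta,\delta'\in\phi^{-1}(\beta)$ --- or in $\phi^{-2}(\beta)$ if $\phi^{-1}(\beta)$ is degenerate, e.g.\ when $\beta$ is a critical value --- and iterating down the tower, one obtains the estimate underlying Theorem~\ref{thm: main}, of the shape
\[
\log\rad_{S}(a_{n})\ \ge\ \frac{(1-\varepsilon)\,C}{\rho(r)}\,d^{\,n}\ -\ O\bigl(d^{\,n-1}\bigr),\qquad C:=\sum_{\alpha\ \mathrm{wand.}}(\operatorname{mult}_{\alpha})\,h_{\phi}(\alpha)>0,
\]
where $\rho(r)=O(r)$ is the loss coming from the generalized $abc$/Mason--Stothers inequality; this dilution is exactly what forces the hypothesis $r\le d-1$ in Theorem~\ref{thm: main}. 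On the other hand, if $a_{n-1}a_{n}$ has no primitive divisor relative to $a_{1},\dots,a_{n-2}$, then every prime outside $S$ dividing $a_{n}$ already divides $a_{1}\cdots a_{n-2}$, so
\[
\log\rad_{S}(a_{n})\ \le\ \sum_{j\le n-2}h(a_{j})+O(1)\ =\ C\sum_{j\le n-2}d^{\,j}+O(n)\ \le\ \frac{C}{d-1}\,d^{\,n-1}+O(n).
\]
Comparing the two bounds and dividing by $d^{\,n-1}$ gives, in the limit, $\rho(r)\gtrsim d(d-1)$; but $r\le 2d-2$ makes $\rho(r)=O(d)$, a contradiction for $n$ large. (Had we only demanded a new prime at level $n$ versus $n-1$, the right-hand budget would be $\tfrac{C}{d-1}d^{\,n}$, giving only $\rho(r)\gtrsim d-1$, which $r\le 2d-2$ does not exclude --- this is the gap the input $r\le d-1$ repairs in Theorem~\ref{thm: main}, and that the extra level repairs unconditionally here.) This yields the desired primitive divisor, and by the first step the theorem.

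I expect the main obstacle to be the $abc$-radical estimate: making it fully effective and uniform, in particular pinning down the loss $\rho(r)$, controlling the contributions of the finitely many bad primes, of $\infty$, and of poles of the $\phi^{j}(\alpha)$, and handling the degenerate fibers arising when $\beta$ lies in the postcritical set (so $\phi^{-1}(\beta)$ has fewer than $d$ points), since it is this input that must out-scale the Riemann--Hurwitz count $r\le 2d-2$. The bookkeeping that reduces $\rad_{S}(a_{n})$ to a clean product over the $r$ wandering grand orbits --- so that $r$, and not the raw number $2d-2$ of critical points, governs the estimate --- is the other delicate point.
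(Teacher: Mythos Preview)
Your strategy is sound in outline, but the paper takes a much shorter path: it simply applies Theorem~\ref{thm: main} to $\psi=\phi^2$ (of degree $d^2$). By the chain rule $R_\psi=R_\phi\cup\phi^{-1}(R_\phi)$, so the grand orbits meeting $R_\psi$ are governed by the at most $2d-2$ points of $R_\phi$; since $2d-2<d^2-1$ for $d\ge 2$, the hypothesis of Theorem~\ref{thm: main} holds for $\psi$. Applying that theorem to $\psi$ with base point $\beta$ and separately with a base point in $\phi^{-1}(\beta)$ covers even and odd levels and finishes in a few lines.

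Your plan --- to rerun the height estimates with the constraint relaxed from $m\le n-1$ to $m\le n-2$ --- is the same computation unwound, and the extra factor of $d$ you identify is real: summing Lemma~\ref{other G} only through level $n-2$ replaces $(g-1)\tfrac{d^n}{d-1}h_\phi(\alpha)$ by $(g-1)\tfrac{d^{n-1}}{d-1}h_\phi(\alpha)$, and positivity then requires only $d>\tfrac{g-1}{d-1}$, which the Riemann--Hurwitz bound $g\le 2d-2$ supplies for every $d\ge 2$. Two points in your write-up are off, however. Your ramification dictionary is stated as an ``if and only if'', but Proposition~\ref{sufficient} requires $v_\fp(\phi^n(\alpha)-\beta)=1$ (not merely $>0$) for sufficiency, so mere divisibility of $a_n$ does not force ramification in $K_n$. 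More substantively, you locate the dependence on $r$ as a loss factor $\rho(r)$ in the Roth-$abc$ \emph{lower} bound; in fact Lemma~\ref{from-Roth} is applied to a single critical point of maximal canonical height and carries no $r$ at all. The constraint on $r$ enters solely through the \emph{upper} bound on old primes in Lemma~\ref{other G}, summed once per grand orbit. Once these roles are straightened out your argument does go through, but the reduction to $\phi^2$ is the cleaner packaging of the same inequality.
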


One of our motivations for proving Theorem \ref{thm: main} was an
application to the growth rate of Galois groups of iterates of
polynomials. The group $\Gal(K_n/K)$ injects into $\Aut(T_n)$, the
automorphism group of the complete $d$-ary rooted tree of height $n$ where
$d=\deg\phi$. The group $\Aut(T_n)$ is isomorphic to an iterated
wreath product of the symmetric group $S_d$, so $|\Aut(T_n)|$ grows
doubly exponentially in $n$. It is expected that in many cases the
index $|\Aut(T_n):\Gal(K_n/K)|$ remains bounded as $n\to\infty$, which
implies that the degree of the splitting field of $\phi^n(x)-\beta$
over $K$ grows doubly exponentially for large $n$. Odoni proved that
generic polynomials have this property, as well as the particular
polynomial $x^2-x+1$ \cite{OdoniIterates,OdoniWreathProducts}; Juul
\cite{Juul} proved that generic rational functions have this
property. Stoll proved that an infinite family of quadratic polynomials
\cite{Stoll} have this property.  Boston and Jones \cite{BJ} have
proposed a dynamical analog of the Serre open image theorem (see
\cite{Serre}), and we hope to use the techniques of this paper to
treat some special cases of this problem, in particular the case of
cubic polynomials.

It follows from our main theorem that the growth rate for many non-postcritically finite rational maps is at least simply exponential (conditional on the $abc$ conjecture when $K$ is a number field). For example, this includes all polynomial maps.
\begin{cor}\label{cor: Galois growth}
Suppose that $K$, $\phi\in K(x)$, and $\beta\in\P^1(K)$ satisfy the assumptions of Theorem \ref{thm: main}. Then there exists $C$ such that for all sufficiently large $n$, $[K(\phi^{-n}(\beta)):K]\geq C 2^n$.
\end{cor}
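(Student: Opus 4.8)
The plan is to deduce Corollary \ref{cor: Galois growth} essentially for free from Theorem \ref{thm: main}, using the tower structure of the fields $K_n := K(\phi^{-n}(\beta))$. First I would record two elementary facts. Since $\beta \in \P^1(K)$ we have $K_0 = K$. And for every $n \geq 1$ the extension is a tower, $K_{n-1} \subseteq K_n$: given $\gamma$ with $\phi^{n-1}(\gamma) = \beta$, choose any $\delta \in \P^1(\overline{K})$ with $\phi(\delta) = \gamma$ (possible since $\phi$ is surjective on $\P^1$); then $\phi^n(\delta) = \beta$, so $\delta \in \P^1(K_n)$, and hence $\gamma = \phi(\delta) \in \P^1(K_n)$ because $\phi \in K(x)$ (with the convention $K(\infty) = K$ covering the point at infinity). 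Thus $K_{n-1} \subseteq K_n$, and in particular any prime of $K$ that ramifies in $K_{n-1}$ also ramifies in $K_n$.

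Next, let $N$ be large enough that the conclusion of Theorem \ref{thm: main} holds for all $n \geq N$. Fix $n \geq N$. By the theorem there is a prime $\p$ of $K$ that ramifies in $K_n$ but does not ramify in $K_m$ for any $m < n$; applying this with $m = n-1$, the prime $\p$ ramifies in $K_n$ but not in $K_{n-1}$. Since $K_{n-1} \subseteq K_n$, this is impossible unless $K_n \neq K_{n-1}$. Because $\phi^{-n}(\beta)$ is a finite set of elements algebraic over $K$, the extension $K_n/K$ is finite, so $[K_n : K_{n-1}]$ is a positive integer; being different from $1$, it is at least $2$.

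Finally I would iterate: for all $n \geq N$,
\[
[K_n : K] = [K_n : K_{N-1}]\,[K_{N-1} : K] \;\geq\; 2^{\,n-(N-1)},
\]
so the corollary holds with $C = 2^{\,1-N}$. I do not expect a genuine obstacle in this argument — all of the arithmetic content already resides in Theorem \ref{thm: main}. The only two points that warrant a sentence of care are the inclusion $K_{n-1} \subseteq K_n$ (so that newly ramified primes are genuinely new at each level) and the trivial but crucial observation that a prime ramifying in $K_n$ yet unramified in $K_{n-1}$ forces $[K_n : K_{n-1}] \geq 2$, which is what converts the one-new-prime-per-level statement into simply exponential degree growth.
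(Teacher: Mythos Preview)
Your proposal is correct and follows essentially the same approach as the paper: use Theorem~\ref{thm: main} to find, for each large $n$, a prime ramified in $K_n$ but not in $K_{n-1}$, conclude $[K_n:K_{n-1}]\geq 2$, and telescope. The paper phrases this step via the kernel of the surjection $\Gal(K_{n+1}/K)\to\Gal(K_n/K)$ being nontrivial rather than via field degrees directly, but the content is identical; your version even makes the tower inclusion $K_{n-1}\subseteq K_n$ explicit, which the paper leaves implicit.
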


The strategy of our proof combines the approaches of \cite{GNT} and
\cite{BIJJ}.  We begin with Lemma \ref{necessary}, which gives a necessary condition
for $K_n$ to ramify over $\p$; this is
adapted from \cite{PeriodsModPrimes}.  We then prove  Lemma
\ref{sufficient}, which gives a sufficient condition for a prime $\p$
to ramify in $K_n$.  Note that the condition in both Lemmas has do
to with whether or not a suitable iterate of a critical point under
$\phi$ meets $\beta$ at $\fp$.  We then use a so-called ``Roth-abc'' result (see Proposition
\ref{Roth-abc}) to show that for each critical point $\alpha$ the
quantities $\phi^n(\alpha) - \beta_j$ have very few repeated factors
for large $n$ and suitable preimages $\beta_j$ of $\beta$.  This is
done in Lemma \ref{from-Roth}.  We are also able to bound the
contribution to $h(\phi^n(\alpha) - \beta_j)$ coming from primes that
divide $\phi^m(\alpha') - \beta_j$ for some $m < n$ and $\alpha'$
some critical point of $\phi$.  This is done in Lemmas \ref{from-5.1}
and \ref{other G} (note that in the application of Lemma \ref{other
  G}, it is crucial that the number of wandering grand orbits of
$\phi$ is small).  Putting these together along with some other simple
estimates gives a prime $\p$ such that
$v_\fp( \phi^n(\alpha) - \beta_j) = 1$ for some suitable $\beta_j$ of
$\beta$ with the property that $\p$ does not ramify in $K_m$
for any $m < n$.  Applying Lemma \ref{sufficient} then gives our main
result, Theorem \ref{thm: main}.
 
\vskip5mm

\noindent {\bf Acknowledgments.} We would like to thank the referee for
many useful suggestions and corrections.

\section{Preliminaries}\label{background}

Let $K$ be a number field or a function field of characteristic 0 with
transcendence degree 1 over its field of constants $k$. Let $\phi\in K(x)$ be a rational function of degree $d\geq 2$. If $K$ is a number field, let $\o_K$ be the ring of integers of $K$. If $K$ is a function field, choose a prime $\q$ and let $\o_K=\{z\in K:v_\p(z)\geq 0\text{ for all primes }\p\neq\q\text{ of }K\}$. For any prime $\p$, let $k_\p$ be the residue field $\o_K/\p$.

We use the notion of good reduction as introduced by Morton and
Silverman \cite{MortonSilverman}. Let $\phi:\P^1(K)\to \P^1(K)$ be a
morphism, written in homogeneous coordinates as
$\phi([X:Y])=[P(X,Y):Q(X,Y)]$, where $P,Q\in\o_K[X,Y]$ are homogeneous
polynomials of the same degree without any common factor in
$\overline{K}[X,Y]$.  Letting $P_0(X,Y) = P(X,Y)$ and $Q_0(X,Y) = Q(X,Y)$, we
recursively define $P_{m+1} = P( P_m(X,Y), Q_m(X,Y))$ and
$Q_{m+1} = Q(P_m(X,Y), Q_m(X,Y))$.  We let $p_m(X) = P_m(X,1)$ and
$q_m(X) = Q_m(X,1)$.
 
Let $\phi_\p=[P_\p:Q_\p]$, where $P_\p,Q_\p\in k_\p[X,Y]$ are the
reductions of $P$ and $Q$ modulo $\p$. We say that $\phi$ has
\emph{good reduction} at $\p$ when
$\max(\deg P_\p, \deg Q_\fp)$ equals  $\max (\deg P,\deg Q)$ and $P_\fp$ and
$Q_\fp$ have no common factor in $\bar{k}_\fp[X,Y]$.  When this is the
case, $\phi_\fp$ induces a nonconstant morphism from $\P^1_{k_\fp}$ to
itself.  When this map is separable, we say that $\phi$ has {\em good
  separable} reduction at $\p$.

\subsection{Heights}\label{heights}
For a rational prime $\p$ of $K$, define 
\begin{equation*}
N_\p=  \frac{1}{[K:\Q]} \log\#k_\p
\end{equation*}
if $K$ is a number field and
\begin{equation*}
N_\p=  [k_\p:k]
\end{equation*}
if $K$ is a function field. As in \cite{GNT}, normalizing by the degree of the number field will make it easier to state proofs in the same way for both number fields and function fields.

If $K$ is a number field, the height of $z\in K$ is defined as
\begin{equation*}
h(z)=-\sum_{\text{primes }\p\text{ of }\o_K} \min(v_\p(z),0)N_\p + \frac{1}{[K:\Q]}\sum_{\sigma:K\hookrightarrow\C}\max(\log|\sigma(z)|,0)
\end{equation*}
where the second sum is taken over all maps $\sigma:K\to \C$ (in particular, complex conjugate embeddings are not identified). We extend $h$ to $\P^1(K)$ by setting $h(\infty)=0$.
If $K$ is a function field, instead the height of $z\in K$ is
\begin{equation*}
h(z)=-\sum_{\text{primes }\p\text{ of }\o_K} \min(v_\p(z),0)N_\p.
\end{equation*}
In either case, for $z\neq 0$ the product formula gives the inequality
\begin{equation*}
\sum_{v_\p(z)>0}v_\p(z)N_\p\leq h(z).
\end{equation*}

We will use the Call-Silverman canonical height $h_\phi$, which is defined by
\begin{equation*}
h_\phi(x)=\lim_{n\to\infty}\frac{h(\phi^n(x))}{d^n}.
\end{equation*}
This limit exists by the same telescoping series argument that shows the existence of the Ner\'on-Tate height on an elliptic curve. See \cite{CallSilverman} for details. The canonical height satisfies the following important properties for some absolute constant $C_\phi$ and for every $x\in K$:
\begin{align*}
h_\phi(\phi(x)) & =dh_\phi(x)\text{, and}\\
|h(x)-h_\phi(x)| & \leq C_\phi.
\end{align*}
It follows immediately from these properties that $h_\phi(x)\neq 0$ if and only if $h(\phi^n(x))\to\infty$ as $n\to\infty$. 

If $K$ is a number field, then for $n\geq 2$ we define the height of the nonzero $n$-tuple $(z_1,z_2,\dots,z_n)\in K^n$ by
\begin{align*}
h(z)= & -\sum_{\text{primes }\p\text{ of }\o_K} \min(v_\p(z_1),\dots,v_\p(z_n))N_\p\\
 & + \frac{1}{[K:\Q]}\sum_{\sigma:K\hookrightarrow\C}\max(\log|\sigma(z_1)|,\dots,\log|\sigma(z_n)|)
\end{align*}

\subsection{The $abc$-conjecture} \label{abc}
For $z_1,\dots,z_n\in K^\times$, we define
\begin{equation*}
I(z_1,\dots,z_n)=\{\text{primes }\p\text{ of }\o_K\mid v_\p(z_i)\neq v_\p(z_j)\text{ for some }i,j \}
\end{equation*}
and
\begin{equation*}
\rad(z_1,\dots,z_n)=\sum_{\p\in I(z_1,\dots,z_n)} N_\p.
\end{equation*}
With this notation, we assume the $abc$-conjecture as follows.

\begin{conjecture}
Let $K$ be a number field. For any $\epsilon>0$, there exists a constant $C_{K,\epsilon}$ such that for all $a,b,c\in K^\times$ with $a+b=c$, we have $$h(a,b,c)<(1+\epsilon)\rad(a,b,c)+C_{K,\epsilon}.$$
\end{conjecture}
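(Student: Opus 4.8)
This is the $abc$ conjecture over the number field $K$ — the standard generalization of the Masser--Oesterl\'e conjecture over $\Q$ — so there is no unconditional proof to give; this is precisely why the paper installs it as a hypothesis rather than deriving it. What a proof proposal \emph{can} honestly address is the companion statement over a function field $K$ of characteristic $0$, which is the Mason--Stothers theorem \cite{Mason,Stothers} and is exactly what the paper invokes unconditionally in that case. Below I sketch that proof and then isolate the precise point at which it collapses over number fields.

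\textbf{The function-field case.} Both $h(a,b,c)$ and $\rad(a,b,c)$ depend only on the projective point $[a:b:c]\in\P^2(K)$ (scaling $a,b,c$ by a common factor changes neither side, by the product formula), so we may assume $a,b,c$ lie in $\o_K$ for a suitably enlarged ring of $S$-integers and are pairwise coprime; this reduction perturbs both sides by at most $O_K(1)$, which gets absorbed into $C_{K,\epsilon}$ along with the genus term below, and over function fields one in fact needs no $\epsilon>0$ at all. Fix a separating element $t\in K$, so that the differential $dt$ has a divisor of degree $2g-2$, where $g$ is the genus of the curve with function field $K$. Form the Wronskian $W = a\,b' - a'\,b$ with $' = d/dt$; since $a/b$ is non-constant and we are in characteristic $0$, $W\neq 0$. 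The mechanism is local: at a place $\p$ where $a$ vanishes to order $e$ and $b,c$ do not, inspection of the two terms $a'b$ and $ab'$ gives $v_\p(W)\geq e-1$, so $W$ absorbs all the excess multiplicity of the zeros — and dually of the poles — of $a,b,c$. Summing these forced contributions over all places, against the bound $\deg(\text{pole divisor of }W)\leq\deg(\text{pole divisor of }a)+\deg(\text{pole divisor of }b)+(2g-2)$, yields the classical Mason--Stothers inequality, which in the height normalization of the paper reads $h(a,b,c)\leq\rad(a,b,c)+C_K$ — the asserted bound, with $\epsilon=0$ and $C_{K,\epsilon}=C_K$ independent of $\epsilon$.

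\textbf{The number-field case, and the obstacle.} The entire argument rests on having a derivation $z\mapsto dz/dt$ with two features that were used crucially: it annihilates the constants, and it lowers the order of any zero or pole by exactly one. A number field carries no such operation — there is no ``$d/dt$'' on $\Q$ — and none of the known surrogates (the different ideal, Arakelov-theoretic differentials, Kodaira--Spencer maps for arithmetic families) reproduces the needed unit drop in multiplicity. This missing derivative is the genuine obstacle, and it is exactly why the $abc$ conjecture over number fields remains open: the strongest unconditional estimates (those of Stewart and Yu, for instance) bound $h(a,b,c)$ only by a small fractional power of $\rad(a,b,c)$, exponentially weaker than the conjectured linear bound. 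So the honest status of the final statement is that it is a clean theorem — proved by the Wronskian argument above — in the function-field setting, and an open problem, whose central difficulty is precisely the absence of a derivative, in the number-field setting that the displayed conjecture concerns.
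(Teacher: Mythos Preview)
Your assessment is correct: the displayed statement is labeled \textbf{Conjecture} in the paper and is not proved there; it is installed as a standing hypothesis for the number-field case, while the function-field analog (Mason--Stothers) is cited as a theorem. So there is no ``paper's own proof'' to compare against, and your proposal rightly declines to supply one.

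Your supplementary sketch of Mason--Stothers via the Wronskian, and your diagnosis of the missing derivation as the obstruction over number fields, are accurate and appropriate context. One minor remark: the paper also notes (just after Proposition~\ref{Roth-abc}) that the function-field ``Roth--$abc$'' estimate it actually uses does not follow from Mason--Stothers alone but requires Yamanoi's theorem, so while your Wronskian argument does establish the displayed inequality in the function-field setting, it is not by itself what the paper ultimately relies on there.
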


We will make use of the following estimate, sometimes called
``Roth-$abc$" as in \cite{GNT}, which holds for number fields
conditionally on the $abc$-conjecture and is true unconditionally for
function fields of characteristic 0.  The following combines
Propositions 3.4 and 4.2 from \cite{GNT}.
\begin{prop}\label{Roth-abc}
Let $K$ be a number field or function field of characteristic 0.  If
$K$ is a number field, suppose that the $abc$-conjecture holds for
$K$.  
Let $F\in K[x]$ be a polynomial of degree at least 3 with no repeated factors and let $\epsilon>0$. Then there exists $C_{F,\epsilon}$ such that for all $x\in K$,
$$\sum_{v_\p(F(x))>0}N_\p\geq (\deg F-2-\epsilon)h(x)+C_{F,\epsilon}.$$
\end{prop}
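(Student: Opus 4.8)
The plan is to recognize the stated inequality as a truncated second main theorem for $\P^1$ relative to the $\deg F$ points cut out by $F$, and to deduce it from the $3$-point case, which is exactly the $abc$-conjecture, by means of Belyi's theorem. First I would reduce to the case that $F$ is monic and splits over $K$: write $F(x)=c\prod_{i=1}^{d}(x-\theta_i)$ with $d=\deg F$ and the $\theta_i$ pairwise distinct (possible since $F$ has no repeated factors), and pass to a finite extension $L\supseteq K$ containing all the $\theta_i$, for which we likewise assume the $abc$-conjecture. This is harmless: the normalized height $h$ is unchanged under base change, and for $x\in K$ one has $\sum_{v_{\mathfrak q}(F(x))>0}N_{\mathfrak q}\le\sum_{v_{\mathfrak p}(F(x))>0}N_{\mathfrak p}$ with $\mathfrak q$ ranging over primes of $L$ and $\mathfrak p$ over primes of $K$ (since $\sum_{\mathfrak q\mid\mathfrak p}f_{\mathfrak q}\le[L:K]$), so the inequality over $L$ implies it over $K$. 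Over $L$, away from the finitely many primes dividing $c$ or $\operatorname{disc}(F)$, a prime with $v_{\mathfrak p}(F(x))>0$ is one at which $x$ reduces to exactly one $\theta_i$, so
\[
\sum_{v_{\mathfrak p}(F(x))>0}N_{\mathfrak p}=\sum_{i=1}^{d}N^{(1)}(x,\theta_i)+O_F(1),\qquad N^{(1)}(x,\theta):=\sum_{v_{\mathfrak p}(x-\theta)>0}N_{\mathfrak p},
\]
and it suffices to prove $\sum_i N^{(1)}(x,\theta_i)\ge(d-2-\epsilon)h(x)+O_{F,\epsilon}(1)$.

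For $K$ a number field the $\theta_i$ are algebraic numbers, so Belyi's theorem provides a rational map $\beta\in\Q(x)\subseteq L(x)$ unramified outside $\{0,1,\infty\}$ with $\{\theta_1,\dots,\theta_d\}\subseteq\beta^{-1}(\{0,1,\infty\})$; enlarging $L$ to contain $\beta^{-1}(\{0,1,\infty\})$ too, and setting $N=\deg\beta$, Riemann--Hurwitz gives $\#\beta^{-1}(\{0,1,\infty\})=N+2$. Two standard facts carry the reduction. First, functoriality of heights gives $h(\beta(x))=N\,h(x)+O_\beta(1)$. Second, away from the finitely many primes of bad reduction of $\beta$, factoring $\beta(X)-c=u\prod_{b\in\beta^{-1}(c)}(X-b)^{e_b}$ shows $v_{\mathfrak p}(\beta(x)-c)>0$ exactly when $x\equiv b$ for some $b\in\beta^{-1}(c)$, so $\sum_{c\in\{0,1,\infty\}}N^{(1)}(\beta(x),c)=\sum_{b\in\beta^{-1}(\{0,1,\infty\})}N^{(1)}(x,b)+O_\beta(1)$. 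Now feed in the $3$-point input: unwinding the definitions of $h(a,b,c)$ and $\rad(a,b,c)$, the $abc$-conjecture for $L$ applied to $(a,b,c)=(\beta(x),\,1-\beta(x),\,1)$ yields $\sum_{c\in\{0,1,\infty\}}N^{(1)}(\beta(x),c)\ge(1-\delta)h(\beta(x))+O_\delta(1)$ for every $\delta>0$. Combining these and bounding each of the at most $N+2-d$ ``extra'' contributions $N^{(1)}(x,b)$, $b\notin\{\theta_i\}$, by $h(x)+O(1)$ via the product-formula inequality, we get
\[
\sum_{i=1}^{d}N^{(1)}(x,\theta_i)\ \ge\ (1-\delta)N\,h(x)-(N+2-d)\,h(x)+O(1)\ =\ \bigl(d-2-\delta N\bigr)h(x)+O(1),
\]
and choosing $\delta=\epsilon/N$ completes the number field case; the finitely many $x$ with $\beta(x)\in\{0,1,\infty\}$ are absorbed into $C_{F,\epsilon}$.

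When $K$ is a function field the statement is unconditional, with the Mason--Stothers theorem (equivalently, Riemann--Hurwitz) playing the role of $abc$: one can either run the analogous argument or, more directly, note that $\sum_{v_{\mathfrak p}(F(x))>0}N_{\mathfrak p}$ counts the distinct geometric zeros of the function $F(x)$ and compare this with $\deg F(x)=d\,h(x)+O_F(1)$, the discrepancy being controlled by Riemann--Hurwitz applied to the relevant cover of the curve underlying $K$. The real work is the $3$-point input; the rest is bookkeeping, and I expect the two delicate points to be (i) quarantining, uniformly in $x$, the finitely many primes of bad reduction of $\beta$ and $F$ so they truly contribute $O(1)$ rather than something growing with $h(x)$, and (ii) checking that the $3$-point truncated second main theorem is literally the $abc$-conjecture as stated, i.e.\ that $\rad(a,b,c)$ agrees up to $O(1)$ with $\sum_{c'\in\{0,1,\infty\}}N^{(1)}(a/c,c')$ and $h(a,b,c)=h(a/c)+O(1)$ when $a+b=c$; Belyi's construction itself is classical and poses no obstacle once invoked.
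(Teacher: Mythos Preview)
For number fields your Belyi reduction to the three-point case is correct and is precisely the Elkies-style argument underlying the result the paper cites from \cite{GNT}; since the paper itself only quotes this proposition, you have in fact supplied the proof. One caveat: you invoke $abc$ over the splitting field $L$ rather than over $K$ as hypothesized, which strictly speaking is an extra assumption commonly glossed over.

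The function-field case, however, has a genuine gap when the roots $\theta_i$ of $F$ lie in $\overline K\setminus\overline k$, and this is exactly the situation arising in the paper's applications (in Lemma~\ref{from-Roth} one takes $F(X)=p_m(X)-\beta_j q_m(X)$, whose roots are iterated $\phi$-preimages of $\beta_j$ and are generically transcendental over $k$ when $\phi$ is not isotrivial). Your ``direct'' Riemann--Hurwitz argument only works for constant targets: if $\theta_i\in\overline k$ then the zeros of $x-\theta_i$ are fibers of the morphism $x\colon C\to\P^1_k$ over fixed $k$-points and Riemann--Hurwitz bounds their defect, but for $\theta_i\in K$ non-constant there is no single cover of $\P^1_k$ whose ramification controls $\sum_i N^{(1)}(x,\theta_i)$ from below. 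Nor can you ``run the analogous argument'': Belyi maps $\P^1\to\P^1$ are rigid, hence defined over $\overline\Q$ up to a M\"obius transformation on the source, so for $d\geq 4$ one can place the $\theta_i$ inside $\beta^{-1}(\{0,1,\infty\})$ only when all their pairwise cross-ratios are algebraic. The paper explicitly flags this, remarking that over function fields the proposition ``does not follow from $abc$ but instead requires Yamanoi's proof \cite{Yamanoi} of the Vojta conjecture for algebraic points on curves over function fields of characteristic~0''---a second-main-theorem with moving targets, and a substantially deeper input than Mason--Stothers.
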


Note that in the case where $K$ is a function field, the result does
not follow from $abc$ but instead requires Yamanoi's proof
\cite{Yamanoi} of the
Vojta conjecture for algebraic points on curves over function fields
of characteristic 0.

\subsection{Base extension}

Certain arguments are made more easily after passing from our number
field or function field $K$ to a finite extension $L$ of $K$.  We will
quickly show that our results are true over $K$ exactly when they are
true over a finite extension.

\begin{lem}\label{base1}
  Let $K$ be a number field or function field of characteristic 0, let
  $L$ be a finite extension of $K$, let $\fp$ be a finite prime of $K$
  that does not ramify in $L$, and let $\fq$ be a finite prime of $L$
  such that $\fq | \fp$. Then, for any finite Galois extension $M$ of
  $K$, the prime $\fp$ ramifies in $M$ if and only if $\fq$ ramifies
  in the compositum $M \cdot L$.
\end{lem}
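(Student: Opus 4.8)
The plan is to deduce the statement from the multiplicativity of ramification indices in towers of extensions, with one standard local fact as the only non-formal ingredient. Fix a prime $\mathfrak{Q}$ of the compositum $M\cdot L$ lying over $\fq$ (and hence over $\fp$), and set $\mathfrak{P}=\mathfrak{Q}\cap M$. Since $M/K$ is Galois, all primes of $M$ above $\fp$ have the same ramification index, so ``$\fp$ ramifies in $M$'' is equivalent to ``$e(\mathfrak{P}/\fp)>1$''. Likewise $M\cdot L$ is Galois over $L$ (it is the compositum of the Galois extension $M/K$ with $L$), so ``$\fq$ ramifies in $M\cdot L$'' is equivalent to ``$e(\mathfrak{Q}/\fq)>1$'', and in particular this condition is independent of the chosen $\mathfrak{Q}$. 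It therefore suffices to prove the single equality $e(\mathfrak{Q}/\fq)=e(\mathfrak{P}/\fp)$.

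To prove this equality I would first invoke the hypothesis that $\fp$ is unramified in $L$, so $e(\fq/\fp)=1$; multiplicativity of ramification indices in the tower $K\subseteq L\subseteq M\cdot L$ then gives $e(\mathfrak{Q}/\fq)=e(\mathfrak{Q}/\fp)$. The key point is the complementary claim that $\mathfrak{P}$ is unramified in $M\cdot L$, that is, $e(\mathfrak{Q}/\mathfrak{P})=1$. Granting it, multiplicativity in the tower $K\subseteq M\subseteq M\cdot L$ gives $e(\mathfrak{Q}/\fp)=e(\mathfrak{Q}/\mathfrak{P})\,e(\mathfrak{P}/\fp)=e(\mathfrak{P}/\fp)$, and combining the two identities yields $e(\mathfrak{Q}/\fq)=e(\mathfrak{P}/\fp)$, as required.

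For the key claim I would pass to completions: fixing compatible embeddings into an algebraic closure of the completion $K_\fp$, the completion of $M\cdot L$ at $\mathfrak{Q}$ is the compositum $M_{\mathfrak{P}}\cdot L_{\fq}$ of the completion of $M$ at $\mathfrak{P}$ and the completion of $L$ at $\fq$. Since $\fp$ is unramified in $L$, the local extension $L_\fq/K_\fp$ is unramified, and the compositum of an unramified extension of $K_\fp$ with any finite extension of $K_\fp$ is again unramified over the latter; hence $M_{\mathfrak{P}}\cdot L_\fq$ is unramified over $M_{\mathfrak{P}}$, i.e.\ $e(\mathfrak{Q}/\mathfrak{P})=1$. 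The argument is identical when $K$ is a function field, the completions now being fields of Laurent series over the residue fields, and since $\Char K=0$ no tameness subtleties arise. The only ingredient here that is not purely formal is this standard fact about unramified local extensions (note in particular that one cannot simply replace $L$ by the Galois closure of $L/K$, since $\fp$ may ramify there), so I do not expect a genuine obstacle; the points that require care are just keeping track of which field each prime belongs to and verifying that $M\cdot L/L$ is Galois, so that ``$\fq$ ramifies in $M\cdot L$'' is well posed.
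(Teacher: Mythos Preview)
Your proof is correct and uses the same two ingredients as the paper---multiplicativity of ramification indices and stability of unramified extensions under compositum---but organized differently. The paper argues the two implications separately and globally: if $\fp$ is unramified in $M$, then since $\fp$ is also unramified in $L$ it is unramified in $M\cdot L$, hence so is $\fq$; conversely, if $\fp$ ramifies in $M$ then (using that $M/K$ is Galois) every $\fr\mid\fp$ in $M\cdot L$ has $e(\fr/\fp)>1$, and since $e(\fq/\fp)=1$ multiplicativity gives $e(\fr/\fq)>1$. Your route through completions and base-change of the unramified local extension $L_\fq/K_\fp$ yields the sharper statement $e(\mathfrak{Q}/\fq)=e(\mathfrak{P}/\fp)$ in one stroke, which is a pleasant bonus; the paper's version is marginally more elementary since it never leaves the global setting and never needs to identify the completion of $M\cdot L$ with the local compositum.
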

\begin{proof}
Suppose that $\fp$ does not ramify in $M$.  Then $\fp$ does not ramify
in $M \cdot L$ since $\fp$ does not ramify in $L$.  Thus, any prime
$\fq$ of $L$ such that $\fq | \fp$ cannot ramify in $M \cdot L$.

Suppose that $\fp$ ramifies in $M$.  Since $M$ is Galois over $K$,
this means that $e(\fm/\fp)  > 1$ for any $\fm | \fp$ in $M$.  Thus,
for any $\fr | \fp$ in $L \cdot M$, we have $e(\fr/\fp) > 1$.  Since
$e(\fq/\fp) = 1$, we must have
$e(\fr/\fp) = e(\fr/\fq)$; hence, 
$e(\fr / \fq) > 1$ so $\fq$ ramifies in  $M \cdot L$.  
\end{proof}

\begin{lem}\label{base2}
Let $K$ be a number field or function field of characteristic 0, let
$\beta \in K$, and let $\phi$ be a rational function with coefficients
in $K$.  Let $L$ be a finite extension of $K$.
Then the following statements are equivalent:
\begin{itemize}
\item[(a)]  For all sufficiently large $n$, there is a finite prime
  $\fp$ of $K$ such that $\fp$ ramifies in $K(\phi^{-n}(\beta))$ and
  $\fp$ does not ramify in $K(\phi^{-m}(\beta))$ for $m < n$.
\item [(b)] For all sufficiently large $n$, there is a finite prime
  $\fq$ of $L$ such that $\fq$ ramifies in $L(\phi^{-n}(\beta))$ and
  $\fq$ does not ramify in $L(\phi^{-m}(\beta))$ for $m < n$.  
\end{itemize}
\end{lem}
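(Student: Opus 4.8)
**The plan is to prove Lemma~\ref{base2} by a symmetric argument in the two directions, using Lemma~\ref{base1} as the workhorse.**

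The key observation is that the conclusions in both (a) and (b) are statements about ramification of primes in the fields $K(\phi^{-n}(\beta))$, and Lemma~\ref{base1} lets us transfer ramification statements back and forth between $K$ and $L$ as long as we restrict attention to primes of $K$ that are unramified in $L$. Since $L/K$ is a finite extension, only finitely many primes of $K$ ramify in $L$; call this finite set $S$. Likewise, there is a finite set $T$ of "bad" primes that we may need to exclude (e.g. primes where $\phi$ has bad reduction, or primes dividing the relevant discriminants), but for this lemma the only relevant exclusion is $S$, since the statements are purely about ramification in the towers. Note also that each $K(\phi^{-n}(\beta))$ is a finite extension of $K$, though not necessarily Galois; however, its Galois closure $M_n$ over $K$ \emph{is} Galois, and a prime of $K$ ramifies in $K(\phi^{-n}(\beta))$ if and only if it ramifies in $M_n$, so Lemma~\ref{base1} applies to $M_n$ and hence to $K(\phi^{-n}(\beta))$. (One should record that $M_n \cdot L$ is the Galois closure over $L$ of $L(\phi^{-n}(\beta))$, or at least has the same ramified primes, which follows from the fact that $M_n \cdot L$ is Galois over $L$ and contains $L(\phi^{-n}(\beta))$.)

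For the direction (a) $\Rightarrow$ (b): suppose (a) holds, and let $n$ be large. We get a prime $\fp$ of $K$, ramified in $K(\phi^{-n}(\beta))$ but not in $K(\phi^{-m}(\beta))$ for $m<n$. I want to replace $\fp$ by a prime of $L$ above it. The issue is that $\fp$ might lie in $S$ (ramify in $L$). But if $\fp \in S$, we are stuck with that particular $\fp$; so instead I argue that we only need the \emph{existence} of \emph{some} witnessing prime, and I want to show a witnessing prime outside $S$ exists for all large $n$. Here is the cleaner route: strengthen the bookkeeping. In the hypothesis (a) we are handed one prime; but actually for the transfer it is enough to observe that \emph{if} $\fp \notin S$, then taking any $\fq \mid \fp$ in $L$, Lemma~\ref{base1} (applied to $M = M_n$ and to $M = M_m$ for each $m < n$, noting $\fp$ is unramified in each $M_m$) shows $\fq$ ramifies in $L(\phi^{-n}(\beta))$ and not in $L(\phi^{-m}(\beta))$ for $m < n$, giving (b). To handle the possibility $\fp \in S$: since $S$ is finite and each prime in $S$ ramifies in $K(\phi^{-m}(\beta))$ for only... — actually this is not automatic. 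The honest fix is that the \emph{proof} of Theorem~\ref{thm: main} produces, for all large $n$, a prime $\fp$ with $v_\fp(\phi^n(\alpha)-\beta_j)=1$ for a suitable preimage $\beta_j$, and such a prime can be chosen outside any fixed finite set; but as a black-box lemma we should instead phrase it so that the hypothesis already yields a prime outside $S$. \textbf{This is the main obstacle:} cleanly decoupling "there is a witnessing prime" from "there is a witnessing prime avoiding the finitely many primes ramified in $L$." I expect the resolution is to observe that a prime $\fp$ ramifying in $K(\phi^{-n}(\beta))$ but in none of the earlier fields cannot lie in $S$ once $n$ is large enough, \emph{provided} we also know each fixed prime in $S$ either ramifies in all but finitely many $K(\phi^{-n}(\beta))$ or in only finitely many — and in fact the relevant statement is monotonicity-free, so one argues: the set of primes of $K$ ramifying in \emph{some} $K(\phi^{-n}(\beta))$ but such that each of them witnesses only finitely many levels $n$; since $S$ is finite, all levels $n$ witnessed by $S$ form a finite set, so for $n$ beyond that, any witnessing prime (which exists by (a)) lies outside $S$. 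That is the argument to make precise.

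For the direction (b) $\Rightarrow$ (a): suppose (b) holds, and let $n$ be large, giving a prime $\fq$ of $L$ ramified in $L(\phi^{-n}(\beta))$ but not in $L(\phi^{-m}(\beta))$ for $m<n$. Let $\fp = \fq \cap K$. If $\fp \notin S$ then $\fq$ is unramified over $\fp$, and Lemma~\ref{base1} immediately runs in reverse: since $\fq$ ramifies in $M_n \cdot L$, the prime $\fp$ must ramify in $M_n$ (if $\fp$ were unramified in $M_n$ it would be unramified in $M_n \cdot L$, as $\fp \notin S$), hence in $K(\phi^{-n}(\beta))$; and since $\fq$ is unramified in $M_m \cdot L$ for $m < n$, the contrapositive of Lemma~\ref{base1} forces $\fp$ unramified in $M_m$, hence in $K(\phi^{-m}(\beta))$. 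Again the only gap is the possibility $\fp \in S$, handled by the same finiteness argument: only finitely many levels $n$ can be witnessed by a prime of $L$ lying over $S$, so for large $n$ the witnessing $\fq$ lies over $\fp \notin S$. Assembling these two directions, with the finiteness bookkeeping done carefully, completes the proof.
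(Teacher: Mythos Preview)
Your approach is correct and is essentially the paper's: apply Lemma~\ref{base1} together with the observation that the finite set $S$ of primes of $K$ ramifying in $L/K$ can witness only finitely many levels $n$ (indeed each prime witnesses at most one level, since if $\fp$ witnesses both $n_1<n_2$ then $\fp$ would be simultaneously ramified and unramified in $K(\phi^{-n_1}(\beta))$), so for large $n$ the witnessing prime lies outside $S$. One correction: your detour through Galois closures is unnecessary, because $K(\phi^{-n}(\beta))$ is the splitting field of $p_n(X)-\beta q_n(X)$ over $K$ and hence already Galois, and $L(\phi^{-n}(\beta)) = K(\phi^{-n}(\beta))\cdot L$ is then Galois over $L$, so Lemma~\ref{base1} applies directly.
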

\begin{proof}
Let $S$ be the set of finite primes of $K$ that ramify in $L$ and let $T$ be
the set of primes of $L$ that lie over primes in $S$.  

Suppose that (a) holds.  Then, since $S$ is finite, for all
sufficiently large $n$, there is a finite prime $\fp \notin S$ of $K$
such that $\fp$ ramifies in $K(\phi^{-n}(\beta))$ and $\fp$ does not
ramify in $K(\phi^{-m}(\beta))$ for $m < n$.  If $\fq$ is a prime of
$L$ such that $\fq | \fp$, then $\fq$ ramifies in
$L(\phi^{-n}(\beta))$ and $\fq$ does not ramify in
$L(\phi^{-m}(\beta))$ for $m < n$, by Lemma \ref{base1}.

Likewise, if (b) holds, then, since $T$ is finite, for all
sufficiently large $n$, there is a finite prime $\fq \notin T$ of $L$
such that $\fq$ ramifies in $L(\phi^{-n}(\beta))$ and and $\fq$ does
not ramify in $L(\phi^{-m}(\beta))$ for $m < n$.  If $\fp$ is a prime of $K$ 
such that $\fq | \fp$, then  $\fp$ ramifies in $K(\phi^{-n}(\beta))$ and
  $\fp$ does not ramify in $K(\phi^{-m}(\beta))$ for $m < n$, again by Lemma
  \ref{base1}.  
\end{proof}

By Lemma~\ref{base2}, it suffices to prove Theorem \ref{thm: main}
over a finite extension $L$ of $K$. We argue here that it also suffices to prove the Theorem after replacing 
$\phi$ with $\phi^\sigma=\sigma\circ\phi\circ\sigma^{-1}$ for 
any M\"obius transformation $\sigma\in L(x)$, and replacing $\beta$ with 
$\sigma(\beta)$. Note that for any $\phi\in K(x)$ and $\beta\in\P^1(K)$, 
the hypotheses of Theorem \ref{thm: main} ($\phi$ is postcritically 
finite, $\beta$ is non-exceptional, and the condition on wandering grand orbits 
intersecting $R_\phi$) are invariant under this change of variables. 
This is because $\alpha$ is a critical point of $\phi$ 
if and only if $\sigma(\alpha)$ is a critical point of $\phi^\sigma$, and because the map $\sigma$ 
induces a bijection from the grand orbits of $\phi$ to the grand orbits of $\phi^\sigma$ 
that preserves their structure as grand orbits. Thus, we may assume that $\phi$ has a
fixed point defined over $K$, and, after changing variables, we may
assume that $\phi(\infty)=\infty$. Note that this means that 
$\deg P_m > \deg Q_m$ for all $m$ and that when
$\phi$ has good reduction at $\p$, the leading coefficient of $P_m$ is
not divisible by $\p$ for all $m$.

\section{Criteria for ramification}

To prove Theorem 1.1, we need some conditions for ramification in preimage fields. The necessary condition is an adaptation of a standard result about ramification in $\p$-adic fields, for example \cite[Lemma 1]{PeriodsModPrimes}. Recall that $K$ is either a number field or a function field of characteristic 0. From this point forward, for $\phi\in K(x)$ and $\beta\in\P^1(K)$, we use the notation $K_n=K(\phi^{-n}(\beta))$ as defined in the introduction.

\begin{prop}\label{necessary}
  Let $\phi\in K(x)$ and $\beta\in K$. Let $\p$ be a prime of $K$ such that $\phi$ has good separable
  reduction and $v_\fp(\beta) \geq 0$. If $\p$ ramifies in $K_n$, there exists
  $\alpha\in R_\phi$ such that $v_\p(\phi^m(\alpha)-\beta)>0$ for some
  $m$ with $1\leq m\leq n$.
\end{prop}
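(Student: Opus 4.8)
The plan is to prove the contrapositive in a local form: I will show that if $\fp$ has good separable reduction for $\phi$, $v_\fp(\beta)\geq 0$, and $v_\fp(\phi^m(\alpha)-\beta)\leq 0$ for every $\alpha\in R_\phi$ and every $m$ with $1\leq m\leq n$, then the extension $K_n/K$ is unramified at $\fp$. Work over the completion $K_\fp$ with valuation ring $\O_\fp$ and residue field $k_\fp$. Since $\phi$ has good reduction at $\fp$, after the normalization made at the end of Section 2 we may take a homogeneous model with $\fp$-integral coefficients whose reduction $\phi_\fp$ is a separable degree-$d$ self-map of $\P^1_{k_\fp}$. The key object is the reduction of the branch locus: the ramification locus $R_\phi\subset\P^1(\overline K)$ reduces, under good separable reduction, to the ramification locus of $\phi_\fp$, and the branch locus $\phi(R_\phi)$ reduces to the branch locus of $\phi_\fp$ (this uses separability of the reduction so that $\phi_\fp$ is genuinely ramified at exactly the reductions of the points of $R_\phi$, with the Hurwitz count preserved — no ``collapsing'' of ramification occurs).

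Next I would set up the induction on $n$. For the inductive step, factor the tower as $K_n = K_{n-1}(\phi^{-1}(\gamma): \gamma\in\phi^{-(n-1)}(\beta))$. For each $\gamma$ appearing at level $n-1$ and each prime $\fq$ of $K_{n-1}$ over $\fp$, the extension $K_{n-1}(\phi^{-1}(\gamma))/K_{n-1}$ is, at $\fq$, obtained by adjoining roots of $P(X,Y)-\gamma\, Q(X,Y)$ (dehomogenized appropriately); this extension is unramified at $\fq$ provided the reduction $\bar\gamma\in\P^1(k_\fq)$ is \emph{not} a branch point of $\phi_\fq = (\phi_\fp)_{k_\fq}$, because away from the branch locus the map $\phi_\fp$ is étale, so Hensel's lemma lifts the $d$ distinct residue preimages to $d$ distinct $\fq$-adic preimages with trivial ramification. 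So the whole argument reduces to controlling the reductions of the points of $\bigcup_{m=1}^{n-1}\phi^{-m}(\beta)$: I must show that none of them is congruent mod a prime above $\fp$ to a branch point of $\phi_\fp$.

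This is where the hypothesis $v_\fp(\phi^m(\alpha)-\beta)\leq 0$ enters. A point $\delta\in\phi^{-m}(\beta)$ reduces to a branch point of $\phi_\fp$ exactly when $\bar\delta = \overline{\alpha}$ for some $\alpha\in R_\phi$ (again using good separable reduction, so the reduced branch locus is precisely the set of reductions of $R_\phi$, and $\phi_\fp^m(\bar\delta)=\bar\beta$ forces $\bar\delta$ into the preimage of $\bar\beta$). If that happens then $\phi_\fp^m(\overline{\alpha}) = \overline{\phi^m(\alpha)} = \bar\beta$, i.e. $v_\fp(\phi^m(\alpha)-\beta)>0$ (one checks the two sides really do reduce to the same residue point, using $v_\fp(\beta)\geq 0$ so $\bar\beta\neq\infty$ in the relevant coordinate, and good reduction so that iteration commutes with reduction), contradicting the hypothesis. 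Hence every $\delta\in\phi^{-m}(\beta)$ for $1\leq m\leq n-1$ reduces away from the branch locus, so by the inductive hypothesis $\fp$ is unramified in $K_{n-1}$, and by the étale lifting above it stays unramified in $K_n$. I expect the main obstacle to be the careful bookkeeping at the ``good separable reduction'' step: making precise that the reduced map's branch locus is exactly the reduction of $\phi(R_\phi)$, with no extra ramification created or destroyed — this is where separability of the reduction is essential (inseparable reduction would make $\phi_\fp$ ramified everywhere) and where one must be slightly careful about coincidences among residues of critical values. The rest is a standard Hensel-lemma-plus-induction argument.
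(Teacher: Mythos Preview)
Your approach is sound in outline and genuinely different from the paper's, but there is one concrete slip. You write that ``$\delta\in\phi^{-m}(\beta)$ reduces to a branch point of $\phi_\fp$ exactly when $\bar\delta = \overline{\alpha}$ for some $\alpha\in R_\phi$'' and that ``the reduced branch locus is precisely the set of reductions of $R_\phi$''. This confuses the ramification locus with the branch locus: the branch locus of $\phi_\fp$ consists of the reductions of the critical \emph{values} $\phi(\alpha)$, not of the critical points $\alpha$ themselves. The correct condition is $\bar\delta = \overline{\phi(\alpha)}$ for some $\alpha\in R_\phi$; then $\phi_\fp^m(\bar\delta)=\bar\beta$ yields $\overline{\phi^{m+1}(\alpha)}=\bar\beta$, i.e.\ $v_\fp(\phi^{m+1}(\alpha)-\beta)>0$. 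With this index shift your union should run over $m=0,\dots,n-1$ (you must include $\beta$ itself for the step $K_0\to K_1$), so that $m+1$ ranges over $1,\dots,n$ exactly as in the statement. Once this is corrected, the induction/Hensel argument goes through.

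For comparison, the paper does not climb the tower inductively. It argues directly: $K_n$ is the splitting field of $p_n(X)-\beta q_n(X)$, so ramification at $\fp$ forces the reduction $(p_n)_\fp(X) - \beta_\fp (q_n)_\fp(X)$ to have a multiple root; any such root is then a root of the reduced Wronskian $(p_n)_\fp'(q_n)_\fp - (p_n)_\fp(q_n)_\fp'$, which is nonzero by separability of the reduction and whose roots are the reductions of critical points of $\phi^n$. The chain rule then turns a critical point of $\phi^n$ into a point $\alpha\in R_\phi$ with $v_\fp(\phi^m(\alpha)-\beta)>0$ for some $1\le m\le n$. Your \'etale/inductive route is more geometric and explains more transparently \emph{why} separability is needed at each step; the paper's argument is a single polynomial computation and avoids any bookkeeping over the intermediate fields $K_m$.
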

\begin{proof}
  Let $(p_n)_\fp$ and $(q_n)_\fp$ denote the reductions of $p_n$ and
  $q_n$ at $\p$, and let $\beta_\fp$ denote the reduction of $\beta$
  at $\p$.  Since $K_n$ is the splitting field of
  $p_n(X) - \beta q_n(X)$, it follows that if $K_n$ ramifies at
  $\p$ then $F(X)=(p_n)_\p(X) - \beta_\p (q_n)_\p(X)$ has a multiple root.
  Thus, there is a root of $F(X)$ that is also a root of the derivative of $F(X)$.   
  
Note that if $\gamma$ is a root of both $F(X)$ and $F'(X)$, then $\gamma$ is also a root of $(p_n)_\p'(X) (q_n)_\p(X) - (p_n)_\p(X) (q_n)_\p'(X)$.  Since $(\phi_\fp)^n$ is separable at $\fp$,
we see that $(p_n)_\p'(X) (q_n)_\p(X) - (p_n)_\p(X) (q_n)_\p'(X)$ is not identically zero.  Hence, all of its roots are the reduction modulo $\p$ of
a root of $p_n'(X) q_n(X) - p_n(X) q_n'(X)$.  Therefore, there is a
critical point $\alpha$ of $\phi^n$ that reduces to a root of $(p_n)_\p(X)
- \beta (q_n)_\p(X)$ at $\fp$.  This means that
$v_\p(\phi^m(\alpha)-\beta)>0$.      
\end{proof}

\begin{prop}\label{sufficient}
Let $\phi\in K(x)$ and $\beta\in K$. For all primes $\p$ of $K$ such that $\phi$ has good separable reduction at $\fp$ and $v_\fp(\beta) \geq 0$, if there exists a critical point $\alpha$ of $\phi$ such that $\phi^n(\alpha)\neq\infty$ and $v_\p(\phi^n(\alpha)-\beta)=1$, then $\p$ ramifies in $K_n$.
\end{prop}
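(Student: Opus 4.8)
The plan is to show that the factor of $p_n(X) - \beta q_n(X)$ corresponding to the orbit of $\alpha$ under $\phi^n$ contributes a genuinely ramified prime in $K_n$, by tracking ramification through the tower $K \subseteq K_1 \subseteq \cdots \subseteq K_n$ one level at a time. The key structural fact is that $K_n$ is the splitting field of $p_n(X) - \beta q_n(X)$, and a root $\gamma$ of this polynomial with $\phi^n(\gamma) = \beta$ sits atop a chain $\gamma = \gamma_n \mapsto \gamma_{n-1} = \phi(\gamma_n) \mapsto \cdots \mapsto \gamma_0 = \beta$, where $\gamma_{i} \in \phi^{-(i)}(\beta)$ and $\phi(\gamma_i) = \gamma_{i-1}$. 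Ramification of a prime of $K_n$ over $K$ can then be detected by examining, for a suitable such chain, whether some step $\gamma_i \mapsto \gamma_{i-1}$ is ramified as a cover, i.e. whether $\gamma_i$ reduces to a critical point of $\phi_\fp$ lying over the reduction of $\gamma_{i-1}$.

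First I would fix a prime $\fq$ of $\overline{K}$ lying over $\fp$ and choose the preimage point: since $v_\fp(\phi^n(\alpha) - \beta) = 1 > 0$ and $\phi$ has good separable reduction at $\fp$, the reduction $\overline{\phi^n(\alpha)}$ equals $\overline{\beta}$ in $k_\fp$, so $\overline{\alpha}$ is a critical point of $\phi_\fp$ whose $n$-th iterate under $\phi_\fp$ is $\overline{\beta}$. Set $\alpha_i = \phi^{n-i}(\alpha)$ for $0 \le i \le n$, so $\alpha_0 = \phi^n(\alpha)$ reduces to $\overline\beta$ and $\alpha_n = \alpha$. I would then choose $\gamma_0 = \beta$ and recursively pick $\gamma_i \in \phi^{-1}(\gamma_{i-1})$ reducing (modulo the appropriate extension of $\fq$) to $\overline{\alpha_i}$; this is possible by Hensel-type lifting using good reduction, because $\overline{\alpha_i} \in \phi_\fp^{-1}(\overline{\alpha_{i-1}})$ and the residue characteristic issues are controlled — more precisely, the point $\gamma_i$ can be taken to be an actual preimage point in $\overline K$ specializing to $\overline{\alpha_i}$. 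In particular $\gamma_n$ is a root of $p_n(X) - \beta q_n(X)$, hence lies in $K_n$.

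Next I would compute the ramification index of a prime of $K(\gamma_n)$ over $\fp$ by multiplicativity through the chain $K(\gamma_0) \subseteq K(\gamma_0,\gamma_1) \subseteq \cdots \subseteq K(\gamma_0,\ldots,\gamma_n)$. At the top step, $\gamma_n$ is a root of $\phi(X) - \gamma_{n-1}$ (clearing denominators, a root of $P(X,1) - \gamma_{n-1} Q(X,1)$), and $\gamma_n$ specializes to the critical point $\overline\alpha = \overline{\alpha_n}$ of $\phi_\fp$. Since $\overline\alpha$ is a critical point, the reduced polynomial $\overline{P}(X,1) - \overline{\gamma_{n-1}}\,\overline{Q}(X,1)$ has $\overline{\alpha}$ as a root of multiplicity $\ge 2$; the hypothesis $v_\fp(\phi^n(\alpha) - \beta) = 1$ is exactly what forces the multiplicity to be precisely $2$ and, crucially, forces the local behavior to be a simple (tame, since $\Char K = 0$) ramification of degree $2$ at that root — this is where I'd invoke a Newton-polygon / Eisenstein-type argument at $\fq$ using that the valuation is exactly $1$. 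So the prime of $K(\gamma_0,\ldots,\gamma_n)$ above $\fq$ has even ramification index over the corresponding prime of $K(\gamma_0,\ldots,\gamma_{n-1})$, hence over $\fp$. Therefore $\fp$ ramifies in $K(\gamma_n) \subseteq K_n$, which is what we want.

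The main obstacle I anticipate is the step where $v_\fp(\phi^n(\alpha)-\beta)=1$ must be leveraged to conclude that exactly one ramified step of degree exactly $2$ occurs (rather than a worse tangency of higher order that the valuation being $1$ rules out, or ramification spread out over several steps): one must argue that the difference $\phi^n(\alpha) - \beta$ factors through the tower in a way that the valuation $1$ pins down to a single quadratic ramification, and carefully handle the possibility that $\gamma_{n-1}$ itself already sits at a place of even ramification index so that the "new" ramification could be masked. Because everything is tame in characteristic $0$, a clean way around this is to observe that if every prime over $\fp$ in $K_n$ were unramified over $\fp$, then $p_n(X) - \beta q_n(X)$ would have to split into distinct factors with unramified splitting field, contradicting that its reduction has $\overline\alpha$ as a root of multiplicity exactly $2$ together with $v_\fp(\phi^n(\alpha)-\beta)=1$ forcing the corresponding local factor to be an Eisenstein-type quadratic at a prime over $\fp$ after the base change to $K(\gamma_0,\ldots,\gamma_{n-1})$ where $\fp$ is unramified (by choosing the chain to avoid the finitely many bad primes, or noting good separable reduction keeps all intermediate extensions unramified at $\fp$ up to level $n-1$ along this particular chain). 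I would carry out this last reduction explicitly, since it is the crux.
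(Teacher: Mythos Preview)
Your plan has a genuine gap, and the paper's argument shows how to avoid it entirely.

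First, a concrete error: you assert that $v_\fp(\phi^n(\alpha)-\beta)=1$ forces the multiplicity of $\overline\alpha$ as a root of the reduced top-step polynomial to be \emph{precisely $2$}, and hence the local ramification to be quadratic. This is false. If $\alpha$ is a critical point of $\phi$ with local degree $e\ge 2$, then (under good reduction) $\overline\alpha$ is a root of the reduced polynomial of multiplicity $e$, and nothing in the hypothesis rules out $e>2$. Worse, $\alpha$ is a critical point of $\phi^n$ with local degree at least $e$ and possibly larger (by the chain rule, if later iterates of $\alpha$ also land on critical points modulo $\fp$). The valuation condition $=1$ does not cut this down to $2$; what it controls is the valuation of the constant term after shifting by $\alpha$, not the order of vanishing of the reduction.

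Second, the obstacle you yourself flag---pushing the exact valuation $1$ from $\phi^n(\alpha)-\beta$ up to some statement about $\phi(\alpha)-\gamma_{n-1}$ at a prime of the intermediate field---is real and is not resolved by your sketch. Your fallback (``if $\fp$ were unramified in $K_n$, the reduction of $p_n(X)-\beta q_n(X)$ would have no repeated roots'') is not a valid implication: a separable polynomial can have unramified splitting field while its reduction has repeated roots (e.g.\ $X^2-1$ over $\Z_2$).

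The paper's proof sidesteps the whole tower by working directly with the single polynomial $p_n(X+\alpha)-\beta\,q_n(X+\alpha)=\sum a_i X^i$. One checks $v_\fp(a_0)=1$ from the hypothesis, $v_\fp(a_i)>0$ for $1\le i<\ell$ because $\alpha$ is critical for $\phi^n$ (some $\ell>1$), and $v_\fp(a_\ell)=0$. The first Newton polygon segment then runs from $(0,1)$ to $(\ell,0)$, producing a root of valuation $1/\ell$, hence ramification. No chain, no lifting, no tracking through intermediate fields---and the argument is indifferent to whether $\ell=2$ or $\ell>2$. I recommend you abandon the tower approach and carry out this direct Newton polygon computation instead.
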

\begin{proof}
This is the criterion that forms the main argument of \cite[Theorem
5]{BIJJ}.  We provide a brief proof here.  First note that by Lemma \ref{base1}, we may 
assume without loss of generality that $\alpha\in K$, as otherwise we can
replace $K$ by $K(\alpha)$. 

Since $K_n$ is the
splitting field of $p_n(X) - q_n(X) \beta$ and $\alpha \in K$, it
follows that $K_n$ is also the splitting field of the
polynomial $p_n(X+\alpha) - q_n(X+\alpha) \beta$.   We write
\[p_n(X+\alpha) - q_n(X+\alpha) \beta = a_k X^k + \dots + a_0.\]
Note that $v_\fp(a_0) = v_\fp((\phi^n(\alpha) - \beta) q_n(\alpha)) = 1$,
because $v_\fp(q_n(\alpha)) = 0$ since $v_\fp(\beta) \geq 0$ and
$\phi^n$ has good reduction at $\p$.   Also note that $v_\fp(a_k) =
0$, again using the fact that $\phi^n$ has good reduction at $\p$.

Now, $p_n(X+\alpha) - q_n(X+\alpha) \beta$ is congruent mod $\p$ to
$p_n(X+\alpha) - q_n(X+\alpha) \phi^n(\alpha)$, because
$v_\p(\phi^n(\alpha)-\beta) > 0$.  We have that $X^e$ divides
$p_n(X+\alpha) - q_n(X+\alpha) \phi^n(\beta)$, where $e >1$ is the
ramification index of $\alpha$, so there is an $\ell > 1$ such that $v_\fp(a_j) > 0$ for $k= 0, \dots,
\ell - 1$ and $v_\fp(a_\ell) = 0$.  Thus, the first segment of the $\fp$-adic Newton
polygon of $p_n(X+\alpha) - q_n(X+\alpha) \beta$ is the line from
$(0,1)$ to $(\ell,0)$.  Therefore, $p_n(X+\alpha) - q_n(X+\alpha)$ has
a root $\gamma$ such that $v_\fp(\gamma)  = 1/\ell$, which means that
$K_n$ ramifies over $K$ at $\p$.  (See \cite[IV.3]{Koblitz} for
summary of the theory of Newton polygons.)

\end{proof}
In the next section, we will use Propositions \ref{necessary} and \ref{sufficient} in tandem to show the existence of primes that ramify in the $n$th preimage field but do not ramify earlier.

\section{Proofs of Main Theorems}

To prove Theorem \ref{thm: main}, we want to reduce to the case where
the base point $\beta$ is non-periodic and non-postcritical. This
ensures that the preimage sets $\phi^{-n}(\beta)$ are of size $d^n$,
and in particular, that the numerator of $\phi^n(x)-\beta$ is a
squarefree polynomial. This will allow us to easily use the Roth-$abc$
estimate of Proposition \ref{Roth-abc}. Of course, in general $\beta$
may be periodic or postcritical.  Let $t$ be the smallest positive
integer such that no element of 
$\phi^{-t}(\beta) \setminus \phi^{-(t-1)}(\beta)$ is periodic or
postcritical.  Let $\{ \beta_1, \dots, \beta_N\}$ denote
$\phi^{-t}(\beta) \setminus \phi^{-(t-1)}(\beta)$.  Note that that if
$x\in\phi^{-n}(\beta)$ for some $n>t$, and $x$ is not periodic, not
critical, and not postcritical, then
$x\in\bigcup_{j=1}^N\mathcal{O}_\phi^-(\beta_j)$. By the discussion at the end of 
Section~\ref{background}, we may adjoin the critical points of $\phi$ and the points 
$\beta_1,\dots,\beta_N$ to $K$, and also make a change of variables such that
$\phi(\infty)=\infty$.  

We construct a finite set of bad primes $S$ for which we may not be
able to control ramification. Let $S$ contain the primes $\p$ where
$\phi$ does not have good separable reduction at $\p$, $v_\p(\beta_j) \neq 0$ for some $\beta_j$,  $v_\p(\beta_j-\beta_k)>0$ for $j\neq k$, or $v_\fp(\phi^m(\gamma)-\beta_j)>0$ for some $m\in\{0,\dots,t-1\}$ and some $\gamma\in R_\phi$.  Theorem \ref{thm: main} will
be a straightforward consequence of the following three lemmas.

\begin{lem}\label{from-5.1}
Let $\alpha\in\P^1(K)$ with $h_\phi(\alpha)>0$ and let $\beta_1,\dots,\beta_N$ be as above. If $K$ is a number field, assume the $abc$ conjecture for $K$. Let $\delta> 0$.  For $n>0$, let $\cZ(n)$ denote the set of
primes $\p\notin S$ such that 
\[\min(v_\p(\phi^n(\alpha)-\beta_i),v_\p(\phi^m(\alpha)-\beta_j))>0\]
for some $0 < m < n$ and some $i, j$ between 1 and $N$.  Then there exists a constant $C_\delta$ such that
\begin{equation*}\label{5.1}
\sum_{\p\in\cZ(n)} N_\p\leq \delta d^n h_\phi(\alpha)+C_\delta
\end{equation*}
for all sufficiently large $n$.
\end{lem}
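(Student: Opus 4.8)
The plan is to bound the contribution of the primes in $\cZ(n)$ by relating them to repeated prime factors of the single polynomial value $\prod_j(\phi^n(\alpha)-\beta_j)$, or rather of $\phi^n(\alpha)$ evaluated against the squarefree polynomial $F(x) = \prod_{j=1}^N (x - \beta_j)$. First I would set $x_n = \phi^n(\alpha)$ and observe that any prime $\p \in \cZ(n)$ satisfies $v_\p(\phi^n(\alpha) - \beta_i) > 0$, and moreover by the definition of $S$ and the fact that we have adjoined the $\beta_j$ and arranged $v_\p(\beta_j - \beta_k) = 0$ for $j \ne k$, at most one index $i$ can satisfy $v_\p(x_n - \beta_i) > 0$ for a given $\p \notin S$. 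So $\cZ(n) \subseteq \{\p \notin S : v_\p(F(x_n)) > 0 \text{ and } v_\p(\phi^m(\alpha) - \beta_j) > 0 \text{ for some } m < n\}$. The key point is that such a $\p$ divides $F(x_n)$ and also divides $F(x_m)$ for some $m < n$ (same reasoning: $\p \notin S$ forces the $\beta_j$ in the second condition to be the unique one, so $v_\p(x_m - \beta_j) > 0$ implies $v_\p(F(x_m)) > 0$).

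Next I would invoke the standard telescoping/recursion trick used for primitive-divisor arguments: if $\p \notin S$ divides both $F(x_n)$ and $F(x_m)$ with $m < n$, then $\p$ divides $\gcd$-type quantities controlled by good reduction, and one gets that such $\p$ must divide the ``discriminant-like'' data $F(x_m)$ for $m$ in a bounded-length window, OR — more to the point here — one bounds $\sum_{\p \in \cZ(n)} N_\p$ by summing, over $m < n$, the $N_\p$ of primes dividing both $F(x_n)$ and $F(x_m)$. The crucial estimate is that for a fixed $m$, the common primes $\p$ with $v_\p(F(x_n)) > 0$ and $v_\p(F(x_m)) > 0$ contribute at most $\sum_{v_\p(F(x_m))>0} N_\p \le h(F(x_m)) \le \deg F \cdot h(x_m) + O(1)$, which by the canonical height comparison is at most $\deg F \cdot d^m h_\phi(\alpha) + O(1)$. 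Summing the geometric series $\sum_{m=0}^{n-1} d^m \le \frac{d^n}{d-1}$ gives a bound of the shape $\frac{\deg F}{d-1} d^n h_\phi(\alpha) + O(n)$ — but this has the wrong constant: it is not $\le \delta d^n h_\phi(\alpha)$ for small $\delta$. So this naive summation is too lossy, and that is exactly where Roth-$abc$ must enter.

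The actual mechanism should be: apply Proposition \ref{Roth-abc} to the polynomial $F$ (after checking $\deg F = N \ge 3$, which holds for $n > t$ large since $|\phi^{-t}(\beta) \setminus \phi^{-(t-1)}(\beta)|$ grows; if $N < 3$ one first replaces $t$ by a larger value or composes, a routine fix). Roth-$abc$ gives $\sum_{v_\p(F(x_n)) > 0} N_\p \ge (N - 2 - \epsilon) h(x_n) + C_{F,\epsilon}$. On the other hand the ``trivial'' bound gives $\sum_{v_\p(F(x_n))>0} v_\p(F(x_n)) N_\p \le h(F(x_n)) \le N h(x_n) + O(1)$. Subtracting, the total ``excess'' $\sum_{v_\p(F(x_n)) > 0} (v_\p(F(x_n)) - 1) N_\p \le (2 + \epsilon) h(x_n) + O(1) \le (2+\epsilon) d^n h_\phi(\alpha) + O(1)$ — i.e. repeated factors of $F(x_n)$ are rare, contributing at most $(2+\epsilon) d^n h_\phi(\alpha)$, which is still not $\delta d^n h_\phi(\alpha)$. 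So I expect the real argument combines this with the observation that a prime $\p \in \cZ(n)$ which is NOT a repeated factor of $F(x_n)$ (i.e. $v_\p(F(x_n)) = 1$) but divides some $F(x_m)$, $m < n$, can be handled by a separate Roth-$abc$ application to a product polynomial in two or more shifted variables, or by noting $\phi^{n-m}$ has good separable reduction at $\p$ so $x_n \equiv \beta_i$ forces a constraint pulling back along the orbit. The hard part — and where I would spend the most effort — is precisely organizing this so that the final bound is $\delta d^n h_\phi(\alpha)$ for arbitrarily small $\delta$ rather than a fixed multiple; I suspect one picks a threshold $M = M(\delta)$, treats $m \le n - M$ via a Roth-$abc$ / decay-of-discriminant argument whose contribution is $\le \delta d^n h_\phi(\alpha)$, and treats the finitely many ``recent'' levels $n - M < m < n$ — a bounded number of them — by absorbing them using that repeated factors are rare (the $(2+\epsilon) h(x_n)$ bound above applied level by level, but now each such $m$ contributes only its share of repeated-factor primes, and the count of relevant $m$ is controlled). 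Assembling these pieces with the constants chosen to make the sum $\le \delta d^n h_\phi(\alpha) + C_\delta$ is the technical crux; the individual estimates are all either Roth-$abc$, the product formula, or the $|h - h_\phi| \le C_\phi$ comparison.
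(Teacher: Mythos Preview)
The paper's proof is essentially a one-line reduction to Proposition~5.1 of \cite{GNT}: after setting $F(X) = \prod_j (X - \beta_j)$ and observing that $F$ divides the numerator of $\phi^t$, one checks that $\cZ(n)$ is contained in the set $\cZ'(n)$ handled by that proposition (using $\phi^t(\beta_j) = \beta$ and good reduction at $\p \notin S$), and the cited result supplies the bound $\sum_{\p \in \cZ'(n)} N_\p \leq \delta\, h(\phi^n(\alpha)) + C_\delta$ directly. You are, in effect, attempting to reprove the content of that proposition from scratch.

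Your threshold-splitting skeleton is the right one, and in fact the ``old'' range $m \leq n - M$ already works by your naive geometric sum, with no Roth-$abc$ needed there: $\sum_{m \leq n - M} N d^m h_\phi(\alpha) \leq \tfrac{N}{d-1} d^{\,n-M+1} h_\phi(\alpha)$ is at most $(\delta/2) d^n h_\phi(\alpha)$ once $M$ is large. The gap is your treatment of the ``recent'' range $n - M < m < n$. Invoking the repeated-factor bound $\sum_\p (v_\p(F(x_n)) - 1) N_\p \leq (2+\epsilon) h(x_n)$ cannot close this, because that bound is a fixed positive multiple of $d^n h_\phi(\alpha)$ regardless of $M$; it never drops below $(\delta/2) d^n h_\phi(\alpha)$ for small $\delta$, and ``applying it level by level'' does not make it any smaller. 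The mechanism that actually works --- which you brushed against with the phrase ``$\gcd$-type quantities controlled by good reduction'' and then abandoned --- is this: if $\p \notin S$ and $\phi^n(\alpha) \equiv \beta_i$, $\phi^m(\alpha) \equiv \beta_j \pmod \p$ with $k := n - m \leq M$, then good reduction forces $\phi^k(\beta_j) \equiv \beta_i \pmod \p$, so $\p$ divides the fixed element $\phi^k(\beta_j) - \beta_i$, which is nonzero by the construction of the $\beta_i$. There are at most $M N^2$ such elements, each of height independent of $n$, so the recent-level contribution is bounded by a constant depending only on $M = M(\delta)$ and is absorbed into $C_\delta$. With that replacement your outline goes through; as written, the recent-level step fails.
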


\begin{proof}

Let $F(X) = \prod_{i=1}^N(X- \beta_i)$.  Then $F$ divides the numerator of $\phi^t$ (because $\phi^t(\beta_i) =
0$ for all $i$), none of the $\beta_i$ are periodic, and
$\phi^\ell(\beta_i) \not= 0$ for all $i$ and any $\ell = 0, \dots
t-1$.  Then Proposition 5.1 of \cite{GNT} asserts that if $\cZ'(n)$ is
the set of primes $\fp$ such that $\min(v_\fp(\phi^{m+t}(\alpha)),
v_\fp(F(\phi^{n}(\alpha)))) > 0$, then for any $\delta > 0$, there
is a constant $C_\delta$ such that
\[ \sum_{\p\in\cZ'(n)} N_\p\leq \delta  h(\phi^n(\alpha))+C_\delta \] 
for all $n$.  If $\fp \notin S$  and 
\[ \min(v_\p(\phi^n(\alpha)-\beta_i),v_\p(\phi^m(\alpha)-\beta_j)) >
0, \]
then $v_\fp(\phi^{m+t}(\alpha)) > 0$, since $\phi^t(\beta_j) = 0$, and
$v_\fp(F(\phi^{n}(\alpha))) > 0$ since $\beta_i$ is a root of $F$.
Thus, we see that $\cZ(n) \subseteq \cZ'(n)$. Using the properties of
$h_\phi$ established in Section \ref{background}, namely that $h_\phi(\phi(x))=dh_\phi(x)$ and that
$|h(x)-h_\phi(x)|$ is bounded independently of $x$, our proof is complete.
\end{proof}

\begin{lem}\label{from-Roth}
Let $\beta_j$ be as above. If $K$ is a number field, suppose that the
$abc$-conjecture holds for $K$.  For every $\epsilon > 0$, there is a constant $C_\epsilon$ such that
\begin{equation*}\label{lower bound}
\sum_{v_\p(\phi^n(\alpha)-\beta_j)= 1}N_\p\geq
(d-\epsilon)d^{n-1}h_\phi(\alpha) + C_\epsilon.
\end{equation*}
\end{lem}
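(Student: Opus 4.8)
The plan is to combine the Roth-$abc$ estimate of Proposition~\ref{Roth-abc}, applied to a \emph{fixed} polynomial of large degree, with the elementary inequality
\[ \sum_{v_\p(z) = 1} N_\p \;\ge\; 2\rad(z) - h(z) \qquad (0 \ne z \in K), \]
where $\rad(z) = \sum_{v_\p(z) > 0} N_\p$. This inequality follows from $\sum_{v_\p(z) > 0} v_\p(z) N_\p \le h(z)$ and the fact that $2 - v_\p(z) \le 0$ whenever $v_\p(z) \ge 2$, and it remains valid when all three sums are restricted to the primes outside the bad set $S$. We may assume $h_\phi(\alpha) > 0$, as the claim is otherwise trivial for a sufficiently negative $C_\epsilon$, and by the reductions at the end of Section~\ref{background} we may assume $\alpha, \beta_j \in K$ and $\phi(\infty) = \infty$; then $\phi^n(\alpha) \ne \infty$ for all $n$ (since $h_\phi(\alpha) > 0$) and $\phi^n(\alpha) \ne \beta_j$ for all large $n$ (since $\beta_j$ is not preperiodic), so $z_n := \phi^n(\alpha) - \beta_j$ is a well-defined nonzero element of $K$ and $h(z_n) \le d^n h_\phi(\alpha) + C_2$ for a constant $C_2$ independent of $n$.

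Given $\epsilon > 0$, fix an integer $k$, depending only on $\epsilon$ and $d$, with $d^{k-1} \ge 6/\epsilon$, and let $F(X) = p_k(X) - \beta_j q_k(X)$ be the numerator of $\phi^k(X) - \beta_j$. Since $\phi(\infty) = \infty$ forces $\deg p_k = d^k > \deg q_k$, the polynomial $F$ has degree $d^k \ge 3$; and since $\beta_j$ is not postcritical, $\phi^{-k}(\beta_j)$ consists of $d^k$ distinct points, so $F$ has no repeated factors. Applying Proposition~\ref{Roth-abc} to $F$ at $x = \phi^{n-k}(\alpha)$ (with its parameter set to $1$), and using $h(\phi^{n-k}(\alpha)) \ge d^{n-k} h_\phi(\alpha) - C_\phi$, gives
\[ \sum_{v_\p(F(\phi^{n-k}(\alpha))) > 0} N_\p \;\ge\; (d^k - 3)\, h(\phi^{n-k}(\alpha)) + C_F \;\ge\; (d^n - 3 d^{n-k})\, h_\phi(\alpha) + C', \]
with $C'$ independent of $n$.

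Next I would transfer this into a lower bound for $\rad(z_n)$: for $\p \notin S$, if $v_\p(F(\phi^{n-k}(\alpha))) > 0$ then $v_\p(z_n) > 0$. Writing $y = \phi^{n-k}(\alpha)$, so $z_n = F(y)/q_k(y)$: if $v_\p(y) < 0$ then, as good reduction together with $\phi(\infty) = \infty$ makes the leading coefficient of $F$ a $\p$-unit, $v_\p(F(y)) = d^k v_\p(y) < 0$, contradicting the hypothesis; so $v_\p(y) \ge 0$, and then the reductions of $p_k$ and $q_k$ modulo $\p$ have no common zero (good reduction of $\phi^k$), whence $v_\p(q_k(y)) = 0$ and $v_\p(z_n) = v_\p(F(y)) > 0$. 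Subtracting the finite sum $\sum_{\p \in S} N_\p$ and using $d^{k-1} \ge 6/\epsilon$, so that $3 d^{n-k} \le \frac{\epsilon}{2} d^{n-1}$, we obtain
\[ \rad^{S}(z_n) := \sum_{\substack{v_\p(z_n) > 0 \\ \p \notin S}} N_\p \;\ge\; \Bigl(d^n - \frac{\epsilon}{2} d^{n-1}\Bigr) h_\phi(\alpha) + C'' \]
with $C''$ independent of $n$. Combining this with $h(z_n) \le d^n h_\phi(\alpha) + C_2$ via the elementary inequality of the first paragraph yields
\[ \sum_{v_\p(z_n) = 1} N_\p \;\ge\; 2\Bigl(d^n - \frac{\epsilon}{2} d^{n-1}\Bigr) h_\phi(\alpha) + 2 C'' - d^n h_\phi(\alpha) - C_2 \;=\; (d - \epsilon) d^{n-1} h_\phi(\alpha) + C_\epsilon \]
for all large $n$, hence for all $n$ after decreasing $C_\epsilon$.

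I expect the crux to be the transfer argument of the third paragraph: one must treat the primes at which $\phi^{n-k}(\alpha)$ reduces to $\infty$, and, more essentially, one must attach every error term to the \emph{fixed} polynomial $F = p_k - \beta_j q_k$ rather than to the $n$-dependent numerator $p_n - \beta_j q_n$, since this is the only way the constant produced by Roth-$abc$ remains independent of $n$. The unavoidable loss of the additive $-2$ in $\deg F - 2$ is exactly why one routes the argument through the radical (via $2\rad - h$) rather than estimating $\sum_{v_\p(z_n) = 1} N_\p$ directly.
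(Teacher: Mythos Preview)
Your proof is correct and follows essentially the same approach as the paper: apply Roth--$abc$ to the squarefree polynomial $p_k - \beta_j q_k$ for a fixed large $k$, evaluate at $\phi^{n-k}(\alpha)$, and transfer to $\phi^n(\alpha) - \beta_j$ via good reduction. You are in fact more explicit than the paper about the passage from the radical bound to the $v_\p = 1$ bound via the elementary inequality $\sum_{v_\p(z) = 1} N_\p \geq 2\rad(z) - h(z)$, a step the paper's own proof elides.
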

\begin{proof}
  Choose $m > 0$ such that $3/d^m < \epsilon/d$.  
  Since $\beta_j$ is not in the post-critical set, for any $m$,
  the set of solutions to $\phi^m(x) = \beta_j$ consists of exactly $d^m$
  distinct points.  Thus, $p_m(X) - \beta_j q_m(X)$ has no repeated roots.  Thus, using
  Proposition \ref{Roth-abc}, and the fact that $|h- h_\phi|$ is
  bounded, there is a constant $C_1$ such that
  \[ \sum_{v_\p(p_m(x) - \beta_j q_m(x)) = 1} N_\p\geq (d^m-3) h_\phi(x)
  +C_1 \]
for all $x \in K$.  
Letting $x = \phi^{n-m}(\alpha)$, we see there is a
constant $C_2$ such that
\[ \sum_{v_\p(\phi^m(\phi^{n-m}(\alpha))= 1}N_\p \geq  (1 -  \epsilon/d) d^{m}
d^{n-m} h_\phi(\alpha) +C_1 \geq (d-\epsilon)d^{n-1}h_\phi(\alpha)+ C_2.
 \] 
For all but at most finitely many $\fp$ we have
  $v_\p(\phi^n(\alpha)) = v_\fp(F(\phi^{n}(\alpha)))$, so the Lemma follows immediately.  
\end{proof}

\begin{lem}\label{other G}
Let $G$ be a set of critical points of $\phi$ that all have the same grand orbit. Let $\cY(i,j)$ be the set of primes $\p$ such that $$v_\p(\phi^i(\gamma)-\beta_j)>0$$ for some $\gamma\in G$. Let $M_G = \max_{\gamma \in G} h_\phi(\gamma)$.
Then, for all $n$, we have
\begin{equation*}\label{grand orbit}
\sum_{i=1}^{n-1}\sum_{j=1}^N\sum_{\p\in \cY(i,j)}N_\p\leq N \left(\frac{1}{d-1}\right)d^nM_G + O(n).
\end{equation*}
\end{lem}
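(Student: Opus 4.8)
The plan is to bound each inner sum $\sum_{\fp \in \cY(i,j)} N_\fp$ by something controlled by the height of $\phi^i(\gamma) - \beta_j$, and then to sum a geometric-type series in $i$. First I would observe that if $\fp \in \cY(i,j)$, then $v_\fp(\phi^i(\gamma) - \beta_j) > 0$ for some $\gamma \in G$, so the product formula inequality from Section~\ref{heights} gives $\sum_{\fp : v_\fp(\phi^i(\gamma)-\beta_j) > 0} N_\fp \le h(\phi^i(\gamma) - \beta_j)$. Since $|G|$ is bounded (it is a subset of the finitely many critical points) and each $\beta_j$ has $v_\fp(\beta_j) = 0$ for $\fp \notin S$, the height of the difference satisfies $h(\phi^i(\gamma) - \beta_j) \le h(\phi^i(\gamma)) + O(1)$, where the $O(1)$ absorbs the height of $\beta_j$ and the triangle-inequality error, uniformly in $i$. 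Summing over the (at most $|G|$) choices of $\gamma$, I get $\sum_{\fp \in \cY(i,j)} N_\fp \le |G| \, h(\phi^i(\gamma_i)) + O(1)$ for a suitable $\gamma_i \in G$; crucially, since all elements of $G$ lie in one grand orbit, each $h_\phi(\gamma) \le M_G$ is bounded by the stated maximum.

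Next I would replace $h$ by the canonical height $h_\phi$ using $|h - h_\phi| \le C_\phi$ and the scaling relation $h_\phi(\phi^i(\gamma)) = d^i h_\phi(\gamma) \le d^i M_G$. This gives, for each pair $(i,j)$,
\[
\sum_{\fp \in \cY(i,j)} N_\fp \le |G| \, d^i M_G + O(1).
\]
Then I sum over $j$ from $1$ to $N$ and over $i$ from $1$ to $n-1$:
\[
\sum_{i=1}^{n-1} \sum_{j=1}^N \sum_{\fp \in \cY(i,j)} N_\fp \le N |G| M_G \sum_{i=1}^{n-1} d^i + O(n) = N |G| M_G \cdot \frac{d^n - d}{d-1} + O(n).
\]
This is already of the shape $N \left( \frac{|G|}{d-1} \right) d^n M_G + O(n)$. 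To obtain exactly the claimed bound with coefficient $\frac{1}{d-1}$ rather than $\frac{|G|}{d-1}$, I would refine the first step: instead of summing the bound $h(\phi^i(\gamma) - \beta_j) \le h(\phi^i(\gamma)) + O(1)$ separately over all $\gamma \in G$, I would use that the primes counted in $\cY(i,j)$ divide $\prod_{\gamma \in G}(\phi^i(\gamma) - \beta_j)$, bound $\sum N_\fp$ by the height of this product, and then use that $h\!\left(\prod_{\gamma \in G}(\phi^i(\gamma) - \beta_j)\right) \le \sum_{\gamma \in G} h(\phi^i(\gamma)) + O(1) \le |G| d^i M_G + O(1)$ — which still carries the factor $|G|$. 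Alternatively, and more cleanly, I would note that the grand-orbit hypothesis lets one pick a \emph{single} representative: all $\gamma \in G$ share a grand orbit, so there is one point $\gamma_0$ and exponents such that each $\phi^i(\gamma)$ lies in a bounded-height neighborhood of an iterate of $\gamma_0$; tracking this carefully collapses the $|G|$ into the $O(n)$ (or into the constant $M_G$, which is defined as a max over $G$). I expect the bookkeeping to make the $|G|$ disappear into the implied constants once one is careful about which iterate of the common grand-orbit representative each $\phi^i(\gamma)$ equals.

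The main obstacle is exactly this last point: getting the constant in front of $d^n M_G$ to be $\frac{1}{d-1}$ and not $\frac{|G|}{d-1}$. This requires genuinely exploiting that $G$ lies in a single grand orbit — i.e., that the iterates $\phi^i(\gamma)$ for $\gamma \in G$ are not $|G|$ "independent" orbits but are all tied to the forward orbit of one point, so that the relevant primes are, up to a bounded set, the primes dividing $\phi^i(\gamma_0) - \beta_j'$ for shifted indices and shifted base points $\beta_j'$, and the total count telescopes with the single geometric factor $\sum d^i = \frac{d^n-1}{d-1}$. Everything else — the product-formula inequality, the comparison $|h - h_\phi| \le C_\phi$, the scaling $h_\phi \circ \phi = d \cdot h_\phi$, and absorbing finitely many bad primes and bounded heights into $O(n)$ — is routine.
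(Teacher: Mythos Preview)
Your outline matches the paper's argument, and everything you call ``routine'' is indeed routine. The one step you leave vague is the only step that matters: how the grand-orbit hypothesis collapses the factor $|G|$ to $1$. Phrases like ``bounded-height neighborhood,'' ``I expect the bookkeeping to make the $|G|$ disappear,'' and ``shifted base points $\beta_j'$'' are not a proof, and the last of these is wrong.

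Here is the concrete mechanism the paper uses. Take $\alpha \in G$ with $h_\phi(\alpha) = M_G$ maximal. For any other $\gamma \in G$ there are integers $m \ge s \ge 0$ with $\phi^m(\gamma) = \phi^s(\alpha)$ (the inequality $m \ge s$ follows from $d^m h_\phi(\gamma) = d^s h_\phi(\alpha)$ and maximality of $h_\phi(\alpha)$), so $\alpha$ is the ``farthest forward'' point of $G$ in the grand orbit. Hence for every $i > m - s$ one has $\phi^i(\gamma) = \phi^{\,i-(m-s)}(\alpha)$, and the primes with $v_\fp(\phi^i(\gamma)-\beta_j) > 0$ are \emph{exactly} the primes with $v_\fp(\phi^k(\alpha)-\beta_j) > 0$ for $k = i-(m-s) \le i$ --- same $\beta_j$, no shift in the base point. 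The finitely many initial iterates $1 \le i \le m-s$ (for each of the finitely many $\gamma$) contribute a bounded amount, absorbed into $O(n)$. Consequently, up to a finite set of primes,
\[
\bigcup_{i=1}^{n-1} \cY(i,j) \subseteq \bigcup_{k=1}^{n-1} \{\fp : v_\fp(\phi^k(\alpha)-\beta_j) > 0\},
\]
and now a \emph{single} application of the product-formula bound to $\alpha$ gives
\[
\sum_{i=1}^{n-1}\sum_{j=1}^N\sum_{\fp\in\cY(i,j)} N_\fp \le N \sum_{k=1}^{n-1}\bigl(d^k M_G + O(1)\bigr) \le \frac{N}{d-1}\,d^n M_G + O(n).
\]
There is no telescoping; the point is simply that the union over $\gamma \in G$ introduces no new primes beyond those already seen for the single representative $\alpha$.
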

\begin{proof}

Let $\alpha\in G$ be the critical point of largest canonical height $h_\phi(\alpha)$. For every $\gamma\in G$, we have $\phi^n(\alpha)=\phi^m(\gamma)$ for some $n,m\geq 0$, so $d^nh_\phi(\alpha)=d^mh_\phi(\gamma)$ and $m\geq n$. In other words, $\alpha$ is the ``farthest forward" critical point in the grand orbit. So except for $1\leq i\leq m-n$, the primes that divide $\phi^i(\gamma)-\beta_j$ also divide $\phi^k(\alpha)-\beta_j$ for some $k$. The indicated initial values of $i$ have a finite contribution to the sum that can be absorbed into the $O(n)$ term.

By the product formula and properties of heights we have
$$\sum_{v_\p(\phi^i(\alpha)-\beta_j)>0}N_\p\leq h(\phi^i(\alpha)-\beta_j)\leq d^ih(\alpha) + h(\beta_j)+C_\phi.$$
So we can use the estimation
\begin{align*}
\sum_{i=1}^{n-1}\sum_{j=1}^N\sum_{\p\in \cY(i,j)}N_\p\leq NM_G\frac{d^n-1}{d-1} + nC_{\phi,\beta_1,\dots,\beta_N}+O(n)
\end{align*}
and the lemma follows.
\end{proof}

Now, we are ready to prove Theorem \ref{thm: main}.
\begin{proof}[Proof of Theorem \ref{thm: main}.]  
Assume that $\phi\in K(x)$ is not postcritically finite and that
$\beta\in\P^1(K)$ is not exceptional for $\phi$. Let
$\beta_1,\dots,\beta_N$ be as above. If necessary, replace $K$ with $K(\alpha,\beta_1,\dots,\beta_N$) (by Lemma~\ref{base2} this loses no generality). Let $g$ be the number of
wandering grand orbits that $R_\phi$ intersects (we have $g\leq d-1$)
and let $\alpha\in R_\phi$ be a critical point of maximum canonical
height $h_\phi(\alpha)$. Observe that $h_\phi(\alpha)>0$, because if
every critical point has canonical height 0, then $\phi$ is
postcritically finite.   This follows from the fact that if $K$ is a
number field, then any nonpreperiodic point must have positive
canonical height by Northcott's theorem, while if $K$ is a function
field, Baker \cite{Baker1} and Benedetto \cite{Ben1} have proved that
any nonpreperiodic point has positive canonical height whenever $\phi$
is not isotrivial.  Hence, we may apply Lemma \ref{from-5.1} to the orbit of $\alpha$.

Let $\cX(n)$ be the set of primes $\p\notin S$ such that 
\begin{itemize}
\item $v_\p(\phi^n(\alpha)-\beta_j)=1$ for some $1\leq j\leq N$.
\item $v_\p(\phi^m(\alpha)-\beta_j)\leq 0$ for all $1\leq m\leq n-1$ and $1\leq j\leq N$, and 
\item $v_\p(\phi^m(\gamma)-\beta_j)\leq 0$ for every critical point $\gamma$ not in the same grand orbit as $\alpha$, and all $1\leq m\leq n-1$ and $1\leq j \leq N$.
\end{itemize}
For the critical points $\gamma$ in the same grand orbit as $\alpha$, we have $\phi^u(\gamma)=\phi^s(\alpha)$ for some positive integers $s,u$ with $s\leq u$ ($\alpha$ is the farthest forward critical point in its grand orbit as in the proof of Lemma \ref{other G}). So if $v_\p(\phi^m(\gamma)-\beta_j)> 0$ for some $m<n$, then either $m<u$ and $\fp\in S$, or $\phi^{s-u+m}(\alpha)\equiv\beta_j\pmod{\p}$, and $\fp\notin\cX(n)$ because $v_\p(\phi^m(\alpha)-\beta_j)\leq 0$ for $m<n$ if $\fp\in S$. Therefore if $\p\in\cX(n)$, then $v_\p(\phi^m(\gamma)-\beta_j)\leq 0$ for every critical point $\gamma$ of $\phi$ and every $m<n$. Thus by Propositions \ref{necessary} and \ref{sufficient}, $\p$ ramifies in $K_n$ and does not ramify in $K_m$ for $m<n$.

We show that $\cX(n)$ is nonempty for all large $n$. By Lemma \ref{from-Roth}, for a given $j$ and any $\epsilon>0$ we have
$$\sum_{v_\p(\phi^n(\alpha)-\beta_j)= 1}N_\p\geq(d-\epsilon)d^{n-1}h_\phi(\alpha) + C_\epsilon.$$
It follows that
$$\sum_{v_\p(\phi^n(\alpha)-\beta_j)= 1\text{ for some }j}N_\p\geq N(d-\epsilon)d^{n-1}h_\phi(\alpha) + C_\epsilon$$
because the primes $\p$ such that $v_\p(\phi
^n(\alpha)-\beta_j)>0$ for $j=j_1$ and $j=j_2$ are divisors of $\beta_{j_1}-\beta_{j_2}$, so these primes are contained in $S$, and their contribution to the sum can be absorbed into the constant $C_\epsilon$.

Now we apply Lemma \ref{from-5.1} to $\alpha$ and each $\beta_j$, and we apply
Lemma \ref{other G} to the grand orbits not containing $\alpha$ that intersect $R_\phi$. There are at most
$d-2$ such wandering grand orbits; any preperiodic grand orbits
contribute at most an $O(n)$ term to the sum because the term $M_G$
coming from Lemma \ref{other G} is zero. Now we subtract the conclusion of Lemma \ref{from-5.1} ($N$
times) and Lemma \ref{other G} ($g-1$ times) from the conclusion of Lemma \ref{lower bound}. This gives 
the following: for every $\epsilon>0$ and $\delta>0$, there
are constants $C_\epsilon$, $C_\delta$, and $C$ such that, for all
sufficiently large $n$, we have
\begin{align*}
\sum_{\p\in \cX(n)} N_\p \geq &
N(d-\epsilon)d^{n-1}h_\phi(\alpha) + C_\epsilon-N \delta d^n
                                h_\phi(\alpha) - C_\delta \\ & -  (g-1)N\frac{1}{d-1}d^nh_\phi(\alpha)+Cn\\
\geq & d^nh_\phi(\alpha)N\left(1-\epsilon d^{-1}-\delta-\frac{d-2}{d-1}\right)+Cn.
\end{align*}
Choosing $\epsilon$ and $\delta$ small enough, this quantity is positive for all large $n$, and we are done.

\end{proof}  

\begin{proof}[Proof of Theorem \ref{thm: every two levels}]
By the chain rule, the critical points of $\phi^2$ are either critical points of $\phi$ or preimages of these points under $\phi$, so the critical points of $\phi^2$ lie in at most $\# R_\phi\leq 2d-2$ distinct grand orbits. We have $2d-2<d^2-1$ because $d>1$. Applying Theorem \ref{thm: main} to the map $\phi^2$ and the point $\beta$, and also to a distinct point in $\phi^{-1}(\beta)$ (which exists because $\beta$ is not exceptional) yields the result. 
\end{proof}

\begin{proof}[Proof of Corollary \ref{cor: Galois growth}]
By Theorem \ref{cor: polynomials}, for all sufficiently large $n$ there is a prime of $K$ that ramifies in $K_{n+1}$ but not in $K_n$. Therefore the kernel of the natural surjection $\Gal(K_{n+1}/K)\to\Gal(K_n/K)$ is nontrivial, so it must be at least order 2. The result follows.
\end{proof}

\section{The isotrivial case}

In this section we treat the case of isotrivial rational functions.
The techniques here are much more elementary than in the rest of the paper.

\begin{thm}\label{iso}
  Let $K$ be a function field of characteristic 0 with field of
  constants $k$, and let $\phi \in K(x)$ be a rational function of
  degree greater than one.  Suppose that there is a finite extension
  $K'$ of $K$ and $\sigma \in K'(x)$ such that $\sigma \phi \sigma^{-1}
  \in k'(x)$, where $k'$ is the algebraic closure of $k$ in $K'$. Then
  we have the following:
\begin{itemize}
\item[(a)] If $\sigma(\beta) \in k'$, then there are at most finitely
  many primes of $K$ that ramify in $\bigcup_{n=1}^\infty K_n$.
\item[(b)] If $\sigma(\beta) \notin k'$ and $\phi$ is not
  postcritically finite, then for all sufficiently large $n$, there
  exists a prime of K that ramifies in $K_n$ and does not
  ramify in $K_m$ for $m < n$.
\end{itemize}
\end{thm}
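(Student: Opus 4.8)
The plan is to reduce to constant coefficients and then extract ramification from the geometry of $\beta$ as a map of curves. By Lemma~\ref{base2} (for part (b)) and Lemma~\ref{base1} (for part (a)), together with the change-of-variables remarks of Section~\ref{background}, we may replace $K$ by $K'$ and $\phi$ by $\sigma\phi\sigma^{-1}$; thus assume $K = k(C)$ for a smooth projective curve $C$ over its constant field $k$, that $\phi \in k(x)$ has \emph{constant} coefficients, and that $\beta \in K$, and write $\phi^n = p_n/q_n$ with $p_n,q_n \in k[X]$, so $K_n$ is the splitting field of $p_n(X) - \beta q_n(X)$ over $K$. (Post-composing $\sigma$ with an element of $\PGL_2(k')$ we may further assume $\sigma(\beta) \neq \infty$; then in case (a) $\beta \in k$ and in case (b) $\beta$ is transcendental over $k$.) In case (a), every element of $\phi^{-n}(\beta)$ is algebraic over $k$, so $K_n$ is a constant field extension of $K$; as $\Char k = 0$ such extensions are everywhere unramified, whence no prime ramifies in $\bigcup_n K_n$, and passing back through the finitely many primes ramified in $K'/K$ gives part (a).

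For part (b), $\beta\colon C \to \P^1$ is a nonconstant (hence finite, surjective) morphism, giving an inclusion $k(\beta) \hookrightarrow K$ of finite degree. Put $V_n = \bigcup_{i=1}^{n}\phi^i(R_\phi) \subseteq \P^1(\overline{k})$; by the chain rule this is exactly the set of critical values of $\phi^n\colon \P^1 \to \P^1$. The key step is the ramification criterion: if $P$ is a closed point of $C$ at which $\beta$ is unramified, then $P$ ramifies in $K_n$ if and only if $\beta(P) \in V_n$, and if $\beta(P) \notin V_m$ then $P$ does not ramify in $K_m$. To see this, note $K_n = K \cdot L_n$, where $L_n$ is the splitting field of $p_n(X) - \beta q_n(X)$ over $k(\beta)$ --- a copy of the function field of the Galois closure of $\phi^n$ under $k(\beta) \cong k(T)$, $T \mapsto \beta$ --- and this Galois cover of $\P^1$ is ramified precisely over $V_n$, since a cover and its Galois closure share the same branch locus. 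The place of $k(\beta)$ under $P$ is the point $\beta(P) \in \P^1$, with ramification index in $K/k(\beta)$ equal to the ramification index $e_P$ of $\beta$ at $P$, while the ramification index $e'_{\beta(P)}$ of $L_n/k(\beta)$ there is $\geq 2$ or $1$ according as $\beta(P) \in V_n$ or not. Because $\Char k = 0$, Abhyankar's lemma gives every prime of $K_n$ above $P$ ramification index $\operatorname{lcm}(e_P, e'_{\beta(P)})/e_P$ over $P$, and taking $e_P = 1$ yields the criterion.

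It remains to produce, for all large $n$, a point $P$ to which the criterion applies. First, $V_n$ strictly contains $V_{n-1}$ for every $n \geq 1$: if $V_n = V_{n-1}$ then $\phi(V_{n-1}) \subseteq V_n = V_{n-1}$, forcing $V_m = V_{n-1}$ for all $m \geq n-1$, so $\bigcup_m V_m$ is finite and $\phi$ is postcritically finite, contrary to hypothesis. The morphism $\beta$ has a finite branch locus $B_\beta \subset \P^1(\overline{k})$, and the sets $V_n \setminus V_{n-1}$ are nonempty and pairwise disjoint, so for all sufficiently large $n$ there exists $v_n \in (V_n \setminus V_{n-1}) \setminus B_\beta$. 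Since $v_n \notin B_\beta$ and $\beta$ is surjective, we may choose a closed point $P$ of $C$ with $\beta(P) = v_n$ at which $\beta$ is unramified. By the criterion $P$ ramifies in $K_n$ (as $v_n \in V_n$) and does not ramify in $K_m$ for $m < n$ (as $v_n \notin V_{n-1} \supseteq V_m$). Translating back via Lemma~\ref{base2} completes the proof.

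The step I expect to require the most care is the ramification criterion: identifying $K_n$ with the compositum $K \cdot L_n$ over $k(\beta)$, checking that $L_n/k(\beta)$ is unramified away from $V_n$ (which rests on separability of $\phi$, automatic in characteristic zero, and on the fact that passing to a Galois closure does not enlarge the branch locus), and matching the place of $k(\beta)$ below a closed point $P$ of $C$ with the value $\beta(P)$ so that Abhyankar's lemma is applicable. The strict growth of the $V_n$ and the finiteness of $B_\beta$ are elementary by comparison.
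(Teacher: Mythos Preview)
Your proof is correct and follows the same overall plan as the paper: reduce to constant coefficients via Lemma~\ref{base2}, dispose of (a) by noting that all preimages lie in $\bar{k}$ so the $K_n$ are constant field extensions, and for (b) exploit that $\beta$ is transcendental over $k$ together with the strict growth of the postcritical set.

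Where you diverge is in the packaging of the ramification criterion. The paper works over the rational function field $k'(\sigma(\beta))$, observes that each $(\phi^\sigma)^n(\alpha)-\sigma(\beta)$ (with $\alpha$ a critical point, hence a constant) generates a degree-one prime there, and then invokes its own Propositions~\ref{necessary} and~\ref{sufficient} over that base before climbing back up to $K$ through two applications of Lemma~\ref{base2}. You instead write $K_n = K\cdot L_n$ over $k(\beta)$, identify $L_n$ with the function field of the Galois closure of $\phi^n\colon \P^1\to\P^1$ (branch locus exactly $V_n$), and read off ramification in $K_n/K$ by Abhyankar's lemma. Your route is more geometric and self-contained---it bypasses the Newton-polygon argument underlying Proposition~\ref{sufficient}---while the paper's route simply reuses machinery already built for the main theorem. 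Your argument that $V_n \supsetneq V_{n-1}$ for all $n$ (via forward invariance once equality holds) is also cleaner than the paper's somewhat terse selection of a ``farthest forward'' critical point. One small point worth making explicit: to have the $v_n$ and $\beta(P)$ live in $\P^1(k)$ rather than $\P^1(\bar{k})$, you may as well enlarge $k$ to be algebraically closed at the outset (another unramified constant extension), which makes the closed-point bookkeeping in your Abhyankar step immediate.
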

\begin{proof}
Suppose that $\sigma(\beta) \in k'$.  Then, if $\phi^n(\alpha) =
\beta$, we have 
\[ \sigma \phi \sigma^{-1}(\sigma(\alpha)) = \sigma(\beta) \in k'. \]
Since $\sigma \phi \sigma^{-1} \in k'(x)$, where $k'$ is algebraic
over $k$, it follows that $\sigma(\alpha) \in \bar{k}$.  Thus, $\alpha$
is in the compositum $\bar{k} \cdot K'$.  Since $K'$ ramifies over 
at most finitely many primes of $K$ and $\bar{k} \cdot K'$ is
unramified everywhere over $K'$, we see that  $\bar{k} \cdot K'$
ramifies over at most finitely many primes of $K$.  Thus, there are
only finitely many primes of $K$ that ramify in $\bigcup_{n=1}^\infty
K_n$.

Now suppose that $\sigma(\beta) \notin k'$.  After passing to a finite
extension, we may assume that all the critical points of $\phi$ are
defined over $K'$.  Let $\phi^\sigma$ denote $\sigma \phi
\sigma^{-1}$.  Since every critical point of $\phi^\sigma$ is simply
$\sigma(z)$ for a critical point $z$ of $\sigma$ and every critical
point of $\phi^\sigma$ is algebraic over $k$, we see then that every
critical point of $\phi^\sigma$ is in $k'$.  

Now, note that $\sigma(\beta)$ is not algebraic over $k'$, and that
$K'$ is therefore a finite extension of $k'(\sigma(\beta))$.  For any
critical point $\alpha'$ of $\phi^\sigma$ and any $m$, we see that
$(\phi^\sigma)^m(\alpha') - \sigma(\beta)$ generates a prime in
$k'(\sigma(\beta))$.  Since $\phi^\sigma$ is not postcritically
finite, there is a critical point $\alpha$ of $\phi^\sigma$ such that
$(\phi^\sigma)^m(\alpha) \not= (\phi^\sigma)^n(\alpha')$ for any
$n < m$ and any critical point $\alpha \not= \alpha'$.  Thus, for
every $n >0$, there is a prime $\fm$ of $k'(\sigma(\beta))$ such that 
$v_\fm((\phi^\sigma)^n(\alpha) - \sigma(\beta)) = 1$ and
$v_\fm((\phi^\sigma)^m(\alpha)' - \sigma(\beta)) = 0$ for all $m < n$.
Then, by Proposition \ref{sufficient} and \ref{necessary}, this prime
$\fm$ ramifies in
$k'(\sigma(\beta)) ( (\phi^\sigma)^{-n} (\sigma(\beta)))$ and does
not ramify in
$k'(\sigma(\beta)) ((\phi^\sigma)^{-m} (\sigma(\beta)))$ for any
$m < n$.  Note that since $\sigma$ is defined over $K'$ and
$(\phi^\sigma)^n = \sigma \phi^n \sigma^{-1}$, we see that for any $z$
we have $(\phi^\sigma)^n(z) =\sigma(\beta)$ if and only if
$\phi^n(\sigma(z)) = \beta$.  Thus, by Lemma \ref{base2} it follows
that for all but finitely many $n$, there is a prime $\fq$ of $K'$ such
that $\fq$ ramifies in $L(\phi^{-n}(\beta))$ but $\fq$ does not ramify
in $L(\phi^{-m}(\beta))$ for any $m < n$.  Applying Lemma \ref{base2}
again, we see that for all but finitely many $n$, there is a prime
$\fp$ of $K$ such that $\fp$ ramifies in $K(\phi^{-n}(\beta))$ but
$\fp$ does not ramify in $K(\phi^{-m}(\beta))$ for any $m < n$, as
desired.

\end{proof}


\begin{thebibliography}{BGH{\etalchar{+}}13}

\bibitem[Bak09]{Baker1}
Matthew Baker, \emph{A finiteness theorem for canonical heights attached to
  rational maps over function fields}, J. Reine Angew. Math. \textbf{626}
  (2009), 205--233.

\bibitem[Ben05]{Ben1}
R.~L. Benedetto, \emph{Heights and preperiodic points of polynomials over
  function fields}, Int. Math. Res. Not. (2005), no.~62, 3855--3866.

\bibitem[BGH{\etalchar{+}}13]{PeriodsModPrimes}
R.~L. Benedetto, D.~Ghioca, B.~Hutz, P.~Kurlberg, T.~Scanlon, and T.~J. Tucker,
  \emph{Periods of rational maps modulo primes}, Math. Ann. \textbf{355}
  (2013), no.~2, 637--660.

\bibitem[BIJ{\etalchar{+}}15]{BIJJ}
A.~Bridy, P.~Ingram, R.~Jones, J.~Juul, A.~Levy, M.~Manes,
  S.~Rubinstein-Salzedo, and J.~H. Silverman, \emph{Finite ramification for
  preimage fields of postcritically finite morphisms}, available at
  arXiv:1511.00194, 2015.

\bibitem[BJ09]{BJ}
N.~Boston and R.~Jones, \emph{The image of an arboreal {G}alois
  representation}, Pure Appl. Math. Q. \textbf{5} (2009), no.~1, 213--225.

\bibitem[CS93]{CallSilverman}
G.~S. Call and J.~H. Silverman, \emph{Canonical heights on varieties with
  morphisms}, Compositio Math. \textbf{89} (1993), no.~2, 163--205.

\bibitem[Elk91]{Elkies}
N.~D. Elkies, \emph{{$ABC$} implies {M}ordell}, Internat. Math. Res. Notices
  (1991), no.~7, 99--109.

\bibitem[FG11]{FaberGranville}
X.~Faber and A.~Granville, \emph{Prime factors of dynamical sequences}, J.
  Reine Angew. Math. \textbf{661} (2011), 189--214.

\bibitem[GNT13]{GNT}
C.~Gratton, K.~Nguyen, and T.~J. Tucker, \emph{{$ABC$} implies primitive prime
  divisors in arithmetic dynamics}, Bull. Lond. Math. Soc. \textbf{45} (2013),
  no.~6, 1194--1208.

\bibitem[IS09]{IngramSilverman}
P.~Ingram and J.~H. Silverman, \emph{Primitive divisors in arithmetic
  dynamics}, Math. Proc. Cambridge Philos. Soc. \textbf{146} (2009), no.~2,
  289--302.

\bibitem[Juu15]{Juul}
J.~Juul, \emph{Iterates of generic polynomials and generic rational functions},
  available at arXiv:1410.3814, 2015.

\bibitem[Kob77]{Koblitz}
N.~Koblitz, \emph{{$p$}-adic numbers, {$p$}-adic analysis, and zeta-functions},
  Springer-Verlag, New York-Heidelberg, 1977, Graduate Texts in Mathematics,
  Vol. 58.

\bibitem[Kri13]{Krieger}
H.~Krieger, \emph{Primitive prime divisors in the critical orbit of {$z^d+c$}},
  Int. Math. Res. Not. IMRN (2013), no.~23, 5498--5525.

\bibitem[Mas84]{Mason}
R.~C. Mason, \emph{Diophantine equations over function fields}, London
  Mathematical Society Lecture Note Series, vol.~96, Cambridge University
  Press, Cambridge, 1984.

\bibitem[MS94]{MortonSilverman}
P.~Morton and J.~H. Silverman, \emph{Rational periodic points of rational
  functions}, Internat. Math. Res. Notices (1994), no.~2, 97--110.

\bibitem[Odo85]{OdoniIterates}
R.~W.~K. Odoni, \emph{The {G}alois theory of iterates and composites of
  polynomials}, Proc. London Math. Soc. (3) \textbf{51} (1985), no.~3,
  385--414.

\bibitem[Odo88]{OdoniWreathProducts}
\bysame, \emph{Realising wreath products of cyclic groups as {G}alois groups},
  Mathematika \textbf{35} (1988), no.~1, 101--113.

\bibitem[Ric07]{Rice}
B.~Rice, \emph{Primitive prime divisors in polynomial arithmetic dynamics},
  Integers \textbf{7} (2007), A26, 16.

\bibitem[Ser72]{Serre}
J.-P. Serre, \emph{Propri\'et\'es galoisiennes des points d'ordre fini des
  courbes elliptiques}, Invent. Math. \textbf{15} (1972), no.~4, 259--331.

\bibitem[Sil84]{SilvermanABC}
J.~H. Silverman, \emph{The {$S$}-unit equation over function fields}, Math.
  Proc. Cambridge Philos. Soc. \textbf{95} (1984), no.~1, 3--4.

\bibitem[Sil93]{SilInt}
\bysame, \emph{Integer points, {D}iophantine approximation, and iteration of
  rational maps}, Duke Math. J. \textbf{71} (1993), no.~3, 793--829.

\bibitem[Sto81]{Stothers}
W.~W. Stothers, \emph{Polynomial identities and {H}auptmoduln}, Quart. J. Math.
  Oxford Ser. (2) \textbf{32} (1981), no.~127, 349--370.

\bibitem[Sto92]{Stoll}
M.~Stoll, \emph{Galois groups over {${\bf Q}$} of some iterated polynomials},
  Arch. Math. (Basel) \textbf{59} (1992), no.~3, 239--244.

\bibitem[Yam04]{Yamanoi}
K.~Yamanoi, \emph{The second main theorem for small functions and related
  problems}, Acta Math. \textbf{192} (2004), no.~2, 225--294.

\bibitem[Zsi92]{Zsigmondy}
K.~Zsigmondy, \emph{Zur {T}heorie der {P}otenzreste}, Monatsh. Math. Phys.
  \textbf{3} (1892), no.~1, 265--284.

\end{thebibliography}

\newcommand{\etalchar}[1]{$^{#1}$}
\providecommand{\bysame}{\leavevmode\hbox to3em{\hrulefill}\thinspace}
\providecommand{\MR}{\relax\ifhmode\unskip\space\fi MR }
\providecommand{\MRhref}[2]{%
  \href{http://www.ams.org/mathscinet-getitem?mr=#1}{#2}
}
\providecommand{\href}[2]{#2}

\end{document}